\documentclass[11pt]{amsart}
\usepackage[english]{babel}
\pagestyle{plain}
\usepackage{graphics}
\usepackage{epsfig}
\usepackage{latexsym,amssymb,amsfonts,amsmath, amscd, euscript,  MnSymbol}
\usepackage[all]{xy}

\textwidth 140mm
\textheight 220mm
\topmargin -5mm
\oddsidemargin 8mm

\parindent 5mm


\usepackage{enumerate}

\newcommand{\N}{\mathbb{N}}

\newcommand{\Z}{\mathbb{Z}}
\newcommand{\Q}{\mathbb{Q}}

\newtheorem{definition}{{\bf Definition}}

\newtheorem{theorem}{{\bf Theorem}}
\newtheorem{proposition}{\noindent {\bf Proposition}}
\newtheorem{corollary}{\noindent {\bf Corollary}}

\newtheorem{claim}{\noindent {\bf Claim}}

\newtheorem{example}{\noindent {\bf Example}}
\newtheorem{problem}{\noindent{\bf Problem}}

\def\proofref #1 {{\noindent  {\bf Proof} (#1).}\ }

\newtheorem{lemma}[definition]{\noindent {\bf Lemma}}
\def\endproof{\hfill {\kern 6pt\penalty 500
\raise -0pt\hbox{\vrule \vbox to5pt {\hrule width 5pt
\vfill\hrule}\vrule}}}
\def\centerpicture #1 by #2 (#3){\leavevmode
        \vbox to #2{
        \hrule width #1 height 0pt depth 0pt
        \vfill
        \special{pictfile #3}}}

\baselineskip =18pt
\begin{document}

\title{Isomorphy up to complementation}

\author{Maurice Pouzet}\address{ICJ, Math\'ematiques, Universit\'e Claude-Bernard Lyon1,
43 Bd 11 Novembre 1918, 69622 Villeurbanne Cedex, France,  and Mathematics \& Statistics Department, University of Calgary, Calgary, Alberta, Canada T2N 1N4 }
\email{pouzet@univ-lyon1.fr}
\author{Hamza Si Kaddour}
\address{ICJ, Math\'ematiques, Universit\'e Claude-Bernard Lyon1,
43 Bd 11 Novembre 1918, 69622 Villeurbanne Cedex, France}
\email{sikaddour@univ-lyon1.fr}

\keywords{graph, hypergraph, reconstruction, incidence matrices, Ramsey's theorem.}

\subjclass[2000]{05C50; 05C60}

\date{\today}

\dedicatory{Dedicated with warmth and admiration to Adrian Bondy at the occasion of his seventieth birthday}.

\begin{abstract}   Considering uniform hypergraphs, we prove  that for every non-negative integer $h$ there exist two non-negative integers  $k$ and $t$ with  $k\leq t$ such that two $h$-uniform hypergraphs ${\mathcal H}$ and ${\mathcal H}'$ on the  same set $V$ of vertices, with $\vert V\vert \geq t$, are equal up to complementation whenever ${\mathcal H}$ and ${\mathcal H}'$ are $k$-{hypomorphic up to complementation}. Let  $s(h)$ be the  least integer $k$ such that the conclusion above holds and let $v(h)$ be the least $t$ corresponding to $k=s(h)$. We prove that $s(h)= h+2^{\lfloor \log_2 h\rfloor}
$. In the special case $h=2^{\ell}$ or $h=2^{\ell}+1$, we prove that  $v(h)\leq s(h)+h$.  The values $s(2)=4$ and $v(2)=6$ were obtained in  \cite{dlps1}. 
 \end{abstract}

 \maketitle

\section{Main results}

We extend to hypergraphs a reconstruction result about graphs obtained in \cite{dlps1}. 

We start recalling the result. Let $V$ be a set of cardinality $v$ (possibly infinite). Two graphs $G$ and $G'$ with vertex set $V$ are  {\it isomorphic up to complementation} if $G'$ is isomorphic to $G$ or to the complement 
$\overline G$ of $G$. Let $k$ be a non-negative integer, $G$ and $G'$ are {\it $k$-hypomorphic  up to complementation} if for every $k$-element subset $K$ of $V$, the induced subgraphs $G_{\restriction K}$ and $G'_{\restriction K}$  are isomorphic up to complementation.  A graph $G$ is {\it $k$-reconstructible up to complementation} if every  graph  $G'$ which is  $k$-hypomorphic to   $G$ up to complementation is in fact isomorphic to $G$ up to complementation. 

It is shown in \cite{dlps1} that   two graphs $G$ and $G'$ on the same set of $n$ vertices are equal up to complementation whenever they are $k$-hypomorphic up to complementation and  $4\leq k\leq n-4$ (see \cite {dlps2} for the case $k=n-3$). It is also shown  that  $4$ is  the least integer $k$ such that every graph $G$ having a large number $n$ of vertices is
$k$-reconstructible up to complementation. 

We show here that this result extends to uniform hypergraphs.  Our result is the following (see the definitions in the next section). 

\begin{theorem}\label{thm1} Let $h$ be a non-negative integer. There are two non-negative integers $k$ and $t$, $k\leq t$ such that two $h$-uniform hypergraphs ${\mathcal H}$ and ${\mathcal H}'$ on the  same set $V$ of vertices, $\vert V\vert \geq t$, are equal up to complementation whenever ${\mathcal H}$ and ${\mathcal H}'$ are $k$-{hypomorphic up to complementation}.
\end{theorem}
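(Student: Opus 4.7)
My plan is to reduce the isomorphism-up-to-complementation hypothesis to a system of polynomial identities on local edge-counts, and then combine an inclusion-matrix rank argument with Ramsey's theorem to force global equality. Let $f,g\colon\binom{V}{h}\to\{0,1\}$ be the characteristic functions of ${\mathcal H}$ and ${\mathcal H}'$, and, for any finite $K\subseteq V$, set
\[
F(K)=\sum_{e\in\binom{K}{h}}f(e),\qquad G(K)=\sum_{e\in\binom{K}{h}}g(e).
\]
Isomorphism of two $h$-uniform hypergraphs on $K$ up to complementation forces their edge-counts to satisfy either $F(K)=G(K)$ or $F(K)+G(K)=\binom{|K|}{h}$, so the $k$-hypomorphism hypothesis yields, for every $k$-subset $K$ of $V$,
\[
\bigl(F(K)-G(K)\bigr)\bigl(F(K)+G(K)-\tbinom{|K|}{h}\bigr)=0.
\]
Counting induced subconfigurations of each isomorphism type on $m\le k$ vertices gives further identities at every intermediate scale, which will provide the auxiliary constraints needed below.

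The next step is to extract pointwise conclusions on $\binom{V}{h}$. Setting $d=f-g$ and $s=f+g-1$, the aim is the global dichotomy $d\equiv 0$ or $s\equiv 0$, which is exactly ${\mathcal H}={\mathcal H}'$ or ${\mathcal H}=\overline{{\mathcal H}'}$. The aggregated identity $D(K)S(K)=0$ alone does not immediately descend from $k$-subsets to $h$-subsets, so I would exploit the Gottlieb-type injectivity of the inclusion matrix $W_{h,k}$ between $h$-sets and $k$-sets (whose column rank equals $\binom{|V|}{h}$ as soon as $|V|\ge h+k$), combined with the family of identities at several scales, to prove that pointwise either $d(e)=0$ or $s(e)=0$. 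It is here that the sharp value $s(h)=h+2^{\lfloor\log_2 h\rfloor}$ announced in the abstract likely enters, via $\mathbb{F}_2$-rank phenomena on the incidence matrices.

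The final step is a Ramsey-type argument controlling $t$. Once each $h$-subset $e$ is either ``agreeing'' ($d(e)=0$) or ``flipped'' ($s(e)=0$), a $2$-coloring of $\binom{V}{h}$ is defined. Ramsey's theorem for $h$-uniform hypergraphs yields, for $|V|\ge t$ with $t$ sufficiently large in $k$ and $h$, a subset $W$ of size at least $k$ on which the coloring is constant. A vertex-by-vertex extension, applied to $k$-subsets of $V$ meeting $W$ in $k-1$ vertices, then propagates the common color from $W$ to the whole of $\binom{V}{h}$. The main obstacle is the pointwise dichotomy step: turning the aggregated product-zero identities into an edge-by-edge conclusion, for which both the rank analysis of $W_{h,k}$ and the higher-scale invariants must be used in a coordinated way; matching the known value $s(2)=4$ of \cite{dlps1} will serve as the main calibration point.
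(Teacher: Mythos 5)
Your plan has a genuine gap at its center. First, the ``pointwise dichotomy'' you single out as the main obstacle is vacuous: since $f,g$ take values in $\{0,1\}$, every single $h$-set $e$ automatically satisfies $d(e)=0$ (when $f(e)=g(e)$) or $s(e)=0$ (when $f(e)\neq g(e)$); the two cases are exclusive and exhaustive, so the $2$-coloring of $[V]^h$ is always defined and no rank argument is needed to produce it. The real difficulty is the \emph{global} dichotomy --- showing that the boolean sum $U:=\mathcal H\dot{+}\mathcal H'$ is empty or complete --- and your proposed engine does not deliver it for general $h$. Over $\Q$ the constraints are products $\bigl(F(K)-G(K)\bigr)\bigl(F(K)+G(K)-\binom{k}{h}\bigr)=0$ whose realized alternative may change from one $K$ to another, so Gottlieb--Kantor injectivity of $W_{h\;k}$ has nothing linear to act on; over the two-element field the two alternatives merge into $F(K)\equiv G(K)\pmod 2$ only when $\binom{k}{h}$ is even, and even then the kernel of ${}^tW_{h\;k}$ has dimension $1$ (forcing $U$ empty or complete) precisely when $h$ is a power of $2$ and $k=2rh$ --- this is Corollary \ref{rank} --- so for all other $h$ the mod-$2$ rank argument cannot conclude. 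Second, your propagation step is unjustified: hypomorphy up to complementation on a $k$-set $K$ meeting a monochromatic $W$ in $k-1$ points only provides an isomorphism that may permute the vertices of $K$ arbitrarily, so constancy of the coloring on $K\cap W$ does not transfer to $K$. Theorem \ref{thm2} exhibits pairs that are $k$-hypomorphic up to complementation for all $k\le h+2^{\lfloor\log_2 h\rfloor}-1$ yet not equal up to complementation, so any propagation argument must exploit $k\ge h+2^{\lfloor\log_2 h\rfloor}$ in a quantitative way that your sketch does not contain.

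For comparison, the paper's proof applies Ramsey (Fra\"{\i}ss\'e's theorem on almost constant hypergraphs, Theorem \ref{FRA}) to $\mathcal H$ and $\mathcal H'$ themselves rather than to the agreement coloring, after a compactness argument (Claim \ref{claim:compactness}) reduces to a pair of infinite counterexamples; this yields a finite set $F$ outside of which both hypergraphs are constant. Propagation is then carried out by induction on $\ell=|A\cap F|$ (Theorem \ref{seuil}): the case $\mathcal H_0\simeq\overline{\mathcal H'_0}$ is excluded via the monomorphic decomposition (Proposition \ref{maurice-thiery}) when $k\ge 2\ell+1$, and via a Lucas-theorem parity computation with $k=h+2^{\lfloor\log_2 h\rfloor}$ when $k\le 2\ell$. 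You would need to supply analogues of all three ingredients; as written, your outline establishes only the trivial local dichotomy and the aggregated identities, which are not enough.
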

Let $s(h)$ be the  least integer $k$ such that there is some $t$ such that the the conclusion above holds and let $v(h)$ be the least $t$ corresponding to $k=s(h)$.
 
\begin{theorem} \label{thm-main2}
\begin{enumerate}
\item $s(h) = h+2^{\lfloor \log_2 h\rfloor}$; 

\item $v(h)\leq s(h)+h$ if $h= 2^{\ell}$ or $h= 2^{\ell}+1$. 
\end{enumerate}			
\end{theorem}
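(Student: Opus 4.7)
The theorem splits into three parts: the upper bound $s(h) \leq h + 2^{\lfloor \log_2 h\rfloor}$, a matching lower bound, and the estimate on $v(h)$ in the two special cases. My plan throughout is to encode ${\mathcal H}$ and ${\mathcal H}'$ by their $\mathbb{F}_2$-valued characteristic functions $\chi,\chi'\colon\binom{V}{h}\to\mathbb{F}_2$ and to study $f := \chi+\chi'$; equality up to complementation of ${\mathcal H}$ and ${\mathcal H}'$ is then exactly the statement that $f$ is constant. From the $k$-hypomorphism-up-to-complementation hypothesis one extracts, on each $k$-subset $K$, the edge-count dichotomy that $\sum_{A\in\binom{K}{h}} f(A)$ is $\equiv 0$ or $\equiv \binom{k}{h}$ modulo $2$. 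These per-$K$ equations would then be combined, across $k$ and across the intermediate sizes $h\leq j\leq k$, via the inclusion matrix $W_{h,k}$.

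For the upper bound, the arithmetic heart should be Lucas's theorem applied at $k = h + 2^{\lfloor \log_2 h\rfloor}$. Writing $2^{\ell}\leq h<2^{\ell+1}$, the base-$2$ addition $h+2^{\ell}$ carries at position $\ell$, so $k$ has a $0$ in position $\ell$ while $h$ has a $1$ there; Lucas therefore gives $\binom{k}{h}\equiv 0\pmod 2$, collapsing the dichotomy to the single equation $\sum_{A\in\binom{K}{h}} f(A)\equiv 0\pmod 2$ on every $k$-subset. I would then show by an $\mathbb{F}_2$-rank argument on $W_{h,k}$ that this linear system forces $f$ to be constant once $|V|$ is large enough for the rank to stabilise, with a Ramsey step used to pin down the global constant value.

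For the lower bound, I would construct explicit pairs $({\mathcal H},{\mathcal H}')$ that are $(s(h)-1)$-hypomorphic up to complementation on an arbitrarily large vertex set, yet not equal up to complementation. A natural candidate is to colour $V$ by a map $c\colon V\to\mathbb{F}_2^{\ell}$ and declare an $h$-subset $A$ to be an edge according to the value of $\sum_{v\in A}c(v)$; the verification that this pair is $(s(h)-1)$-hypomorphic but not reconstructible reduces to explicit Lucas-type identities at $k-1 = s(h)-1$, together with a choice of $c$ making the two hypergraphs genuinely inequivalent up to complementation.

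For the bound $v(h)\leq s(h)+h$ in the cases $h = 2^{\ell}$ or $h = 2^{\ell}+1$, I would avoid the general Ramsey estimate: in these cases the $\mathbb{F}_2$-binomial arithmetic is particularly clean (essentially one binary digit of $h$ dominates), so the linear system produced by the upper-bound argument already closes up at $|V| = s(h)+h$ by a direct counting check. The main obstacle I anticipate is the upper bound for general $h$: the hypothesis is really about \emph{isomorphism} up to complementation on each $k$-subset, and using only the derived edge-count dichotomy already loses information, so one has to be careful that the resulting $\mathbb{F}_2$-system remains rigid enough to force $f$ to be constant, before lifting from a specific $|V|$ to all sufficiently large $V$ via Ramsey.
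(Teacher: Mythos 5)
Your reduction of the hypothesis to the single parity system $\sum_{A\in[K]^h} f(A)\equiv 0 \pmod 2$ over all $k$-subsets $K$, followed by an $\mathbb{F}_2$-rank argument on $W_{h\;k}$, cannot work for general $h$, and this is the central gap. For ``$f$ is constant'' to follow from $w_f W_{h\;k}=0$ over the two-element field you need the kernel of $^tW_{h\;k}$ to be exactly the span of the all-ones vector, i.e.\ the mod-$2$ rank of $W_{h\;k}$ to be $\binom{v}{h}-1$; by Wilson's theorem this happens \emph{if and only if} $h$ is a power of $2$ and $k$ is an even multiple of $h$ (Corollary \ref{rank}). For every other $h$ the kernel has dimension at least $2$, so the parity data you keep genuinely does not pin down $\mathcal H\dot{+}\mathcal H'$ to $\{ \emptyset, [V]^h\}$: you have discarded too much of the isomorphy hypothesis. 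Combining the parity equations across several admissible sizes $j\leq k$, as you suggest, is exactly what the paper does for $h=2^{\ell}+1$ (Theorem \ref{thm h=3, k=4;5}, intersecting two kernels), but whether this rescues the argument for other $h$ is posed as an open problem in the paper (e.g.\ $h=6$), so it cannot carry the general upper bound. The paper's actual route for general $h$ is different: compactness reduces to infinite hypergraphs, Ramsey (Fra\"{\i}ss\'e's theorem) produces a finite $F$ outside of which both hypergraphs are constant, and Theorem \ref{seuil} then proves equality by induction on $\vert A\cap F\vert$, using the full isomorphy-up-to-complementation on each witness set $K=A\cup S$; the case $\mathcal H_0\simeq\overline{\mathcal H'_0}$ is excluded either by comparing components of the monomorphic decomposition or, for large $\vert A\cap F\vert$, by precisely the Lucas computation you identified ($\binom{k}{h}$ even, $\binom{k-\ell}{h-\ell}$ odd). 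Your Lucas step is thus only the endgame of one case, not the engine of the proof.

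Two further points. Your lower-bound sketch (colouring vertices by $\mathbb{F}_2^{\ell}$ and summing over an edge) is too underspecified to assess: you never say what the second hypergraph is or why the pair is $(s(h)-1)$-hypomorphic. The paper's construction takes an $F$-constant pair with $\vert F\vert =2^{\lfloor\log_2 h\rfloor}$, the two hypergraphs differing exactly on the $h$-sets containing $F$, together with a circular permutation of $F$ swapping two classes of proper subsets of $F$; the fact that such a partition exists only when $\vert F\vert$ is a power of $2$ (Lucas again) is what produces the threshold $h+2^{\lfloor\log_2 h\rfloor}$, and the bound $k\leq h+\vert F\vert -1$ comes from the dichotomy ``$K$ misses a point of $F$'' versus ``$K\supseteq F$''. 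Finally, for $v(h)\leq s(h)+h$ in the two special cases, what you call a ``direct counting check'' is in fact Wilson's theorem on the mod-$2$ rank of inclusion matrices (plus, for $h=2^{\ell}+1$, an explicit basis of the intersection of two kernels); that input must be named, since it is the only reason the system closes at $v=s(h)+h$.
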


\begin{problem}
Does $v(h)=s(h)+h$ for every $h$?
\end{problem}

The proof of the result of \cite{dlps1} was based on a result of Wilson on the rank of incidence matrices over the two-element field \cite{W}. Here,  we use  essentially Ramsey's theorem, Lucas's theorem and the notion of almost constant hypergraph. We use Ramsey's theorem and compactness theorem of first order logic in order to obtain the existence of $k$ and $t$ (see Theorem \ref{FRA} and Claim \ref{claim:compactness}). We use Lucas's theorem to prove $(1)$ of Theorem  \ref{thm-main2} (we use it for the inequality 
$s(h)\geq h+2^{\lfloor \log_2 h\rfloor}$ (see Theorem \ref{thm2}) and also for the reverse inequality (see Theorem \ref{seuil}), where we use also  a decomposition result about hypergraphs (see Proposition \ref{maurice-thiery})).  With Wilson's theorem, we get  both $(1)$ and $(2)$ when $h$ is of the form $2^{\ell}$ or $2^{\ell}+1$
(see Corollary \ref{$k=2h$ et $v=3h$} and Theorem \ref{thm h=3, k=4;5}). 

In the last section of the paper we introduce a generalization of the notion of isomorphy up to complementation. We consider  colorations of  complete  graphs and hypergraphs and isomorphy up to a permutations of the colors. We raise some questions.

We   refer to \cite{Bo} for notions of graph theory. We use also basic notions of set theory, we denote by $\powerset (V)$ the power set of $V$ and by $[V]^h$ the collection of $h$-element subsets of $V$.

\section{Hypergraphs, incidence matrices and almost constant  hypergraphs}

\subsection{Isomorphy up to complementation}
 
Recall that a {\it hypergraph} is a pair ${\mathcal H}: =(V,{\mathcal E})$ where ${\mathcal E}$ is a collection of subsets of $V$; members of $V$ are the {\it vertices} of  ${\mathcal H}$, whereas members of ${\mathcal E}$ are the {\it hyperedges}. We denote by $V(\mathcal H)$, resp.   $\mathcal E(\mathcal H)$,  the sets of vertices, resp.  hyperedges,  of a  hypergraph $\mathcal H$; we denote by $v(\mathcal H)$, resp. $e(\mathcal H)$, the cardinality of $V(\mathcal H)$, resp. $\mathcal E(\mathcal H)$. 
If $E\in \mathcal E({\mathcal H})$ we set ${\mathcal H}(E)=1$; otherwise we set ${\mathcal H}(E)=0$. 
If $K$ is a subset of $V$, the 
{\it  hypergraph induced by} ${\mathcal H}$ {\it on} $V$ is ${\mathcal H}_{\restriction K}= (K, {\mathcal E}\cap {\powerset (K)})$. 
Let $h$ be an integer; the hypergraph  ${\mathcal H}$  is $h$-{\it uniform} (or \emph{$h$-regular}) if all its edges have size $h$ (for instance every graph is a $2$-uniform hypergraph); we make the convention that a hypergraph with no hyperedge is $h$-uniform for every $h$. With this convention, if  ${\mathcal H}$ is $h$-uniform, the {\it complement of} ${\mathcal H}$,  $\overline{\mathcal H}:=(V, [V]^h\setminus {\mathcal E})$ is $h$-uniform. 

 Let  ${\mathcal H}:=(V,{\mathcal E})$,  ${\mathcal {H}'}:=(V',{\mathcal E}')$, be two hypergraphs. An {\it isomorphism} from 
${\mathcal H}$ onto ${\mathcal H}'$ is any bijective map $f$ from $V$ onto $V'$ such that the natural extension $\overline f$ to $\powerset (V)$  induces a  bijective map from ${\mathcal E}$ onto 
${\mathcal E}'$.  If such a map exists,  ${\mathcal H}$ and ${\mathcal H}'$ are {\it isomorphic}. They  are {\it isomorphic up to complementation} if either ${\mathcal H}$ is isomorphic to  ${\mathcal H}'$ or  ${\mathcal H}$ is isomorphic to  $\overline{{\mathcal H}'}$.  If ${\mathcal H}$ and ${\mathcal H}'$ have the same set $V$ of vertices, we say that there are \emph{equal up to complementation} if ${\mathcal H}= {\mathcal H}'$ or ${\mathcal H}= \overline{{\mathcal H}'}$; if moreover, the induced hypergraphs   ${\mathcal H}_{\restriction K}$ and  ${{\mathcal H}'}_{\restriction K}$ are  isomorphic up to complementation  for all the $k$-element subsets of $V$, we say that  ${\mathcal H}$ and ${\mathcal H}'$ are $k$-{\it hypomorphic up to complementation}.

The relationship between $k$-hypomorphy for different values of $k$ is given by Proposition \ref {thm:down-k-iso}  below. The case of graphs is treated by Proposition 2.4 \cite {dlps1} by means of Gottlieb-Kantor theorem \cite{Go, KA}. The general case follows the same lines.

\subsection{Incidence matrices}
 
Let $V$ be a finite set, with $v$ elements. Given non-negative integers $t,k$, let $W_{t\;k}$ be the  ${v \choose t}$  by  ${v \choose k}$ matrix of $0$'s and $1$'s, the rows of which are indexed by the $t$-element subsets
 $T$ of $V$, the columns are indexed by the $k$-element subsets $K$
of $V$, and where the entry $W_{t\;k}(T,K)$ is $1$ if
 $T\subseteq K$ and is $0$ otherwise.

A fundamental result, due to D.H.Gottlieb \cite{Go}, and independently W.Kantor \cite {KA}, is this:

\begin{theorem}\label{gottlieb-kantor} For $t\leq min{(k,v-k)}$,  $W_{t\; k}$ has full row rank over the field $\Q$ of rational numbers.
 \end{theorem}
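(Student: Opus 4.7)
The plan is to prove the equivalent dual statement: the transposed map $W_{t\; k}^{\top}$ is injective. Explicitly, if $f\colon [V]^{t}\to\Q$ satisfies $\sum_{T\in [K]^{t}}f(T)=0$ for every $K\in [V]^{k}$, then $f\equiv 0$. This is exactly what it means for $W_{t\; k}$ to have full row rank.

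The central step is to show that the Gram matrix $M:=W_{t\; k}W_{t\; k}^{\top}$ is non-singular, since then $\ker W_{t\; k}^{\top}\subseteq \ker M=\{0\}$. A direct count gives
\[
M(T,T')=\binom{v-|T\cup T'|}{k-|T\cup T'|}=\binom{v-2t+|T\cap T'|}{k-2t+|T\cap T'|},
\]
which depends only on $|T\cap T'|$. Hence $M$ lies in the Bose--Mesner algebra of the Johnson association scheme $J(v,t)$ and can be simultaneously diagonalised by its standard orthogonal idempotents $E_{0},\dots,E_{t}$. The resulting eigenvalues admit closed-form expressions as products of binomial coefficients (the Eberlein polynomials).

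The decisive --- and most delicate --- step is then to verify that none of these eigenvalues vanishes: a careful inspection of the formulae shows that the two halves of the hypothesis, $t\leq k$ and $t\leq v-k$, are exactly what is needed to keep all the relevant binomial factors non-zero (the bound $t\leq v-k$ is what prevents the extreme eigenvalue, corresponding to the top idempotent, from collapsing to $0$). An elementary alternative, avoiding association-scheme machinery, is induction on $t$: fix a vertex $x\in V$, split a hypothetical $f\in\ker W_{t\; k}^{\top}$ according to whether $x\in T$, and apply the induction hypothesis on $V\setminus\{x\}$ to the two resulting subsystems with parameters $(t,k)$ and $(t-1,k-1)$. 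This inductive route requires a little extra care in the boundary case $t=v-k$, where the reduced parameters threaten to leave the admissible range; it is in precisely this case that the eigenvalue analysis of $M$ is the cleaner approach.
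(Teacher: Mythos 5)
The paper does not prove Theorem \ref{gottlieb-kantor} at all: it is quoted as a known result of Gottlieb \cite{Go} and Kantor \cite{KA}, so there is no internal argument to measure yours against. Your strategy --- full row rank of $W_{t\;k}$ is equivalent to injectivity of $W_{t\;k}^{\top}$, which you reduce to nonsingularity of the Gram matrix $M:=W_{t\;k}W_{t\;k}^{\top}$ --- is a standard and perfectly viable route, and your formula for $M(T,T')$ is correct. But be aware that since $M$ is automatically positive semidefinite, one has $\ker M=\ker W_{t\;k}^{\top}$ exactly, so passing to $M$ is not by itself a reduction: the entire content of the theorem is concentrated in the eigenvalue computation, which you assert (``a careful inspection of the formulae shows\ldots'') without ever writing the formulae down. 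As it stands, that is a genuine gap sitting precisely on the decisive step.

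The gap is fillable, and you should fill it explicitly. On the $j$-th eigenspace of the Johnson scheme $J(v,t)$ (of dimension $\binom{v}{j}-\binom{v}{j-1}$, for $0\le j\le t$) the eigenvalue of $M$ is $\binom{k-j}{t-j}\binom{v-t-j}{k-t}$; this can be derived from the identity $W_{j\;t}\,W_{t\;k}=\binom{k-j}{t-j}W_{j\;k}$, or checked directly for $t=1$ against the two eigenvalues $k\binom{v-1}{k-1}$ and $\binom{v-2}{k-1}$. For $0\le j\le t$ the first factor is positive because $t\le k$, and the second is positive because $j\le t\le v-k$ gives $v-t-j\ge k-t\ge 0$; hence $M$ is positive definite and the theorem follows. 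Note that both halves of the hypothesis are consumed exactly as you predicted. Your fallback induction on $t$ is closer in spirit to the elementary proofs in the literature, but, as you yourself observe, it strains at the boundary $t=v-k$ --- and that boundary case is the one this paper actually uses (it applies the theorem with $v=k+h$, $t=h$, and Theorem \ref{Ka} is literally the case $k=v-t$), so it cannot be quietly deferred.
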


 If $k:=v-t$ then, up to a relabelling, $W_{t\; k}$ is the adjacency matrix $A_{t,v}$
 of the  {\it Kneser graph} $KG(t,v)$, graph  whose vertices are the $t$-element
subsets of $V$, two subsets forming an edge if they are disjoint.

An equivalent form of Theorem \ref{gottlieb-kantor} is:
\begin{theorem}  \label{Ka} $A_{t, v}$ is non-singular for $t\leq \frac{v}{2}$.
\end{theorem}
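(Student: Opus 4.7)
The plan is to observe that Theorem \ref{Ka} is just Theorem \ref{gottlieb-kantor} specialized to $k = v-t$, once the columns are relabeled by complementation.

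First, I would identify $A_{t,v}$ with $W_{t,\, v-t}$. The map $K \mapsto V \setminus K$ is a bijection from the $(v-t)$-subsets of $V$ onto the $t$-subsets, and $T \subseteq K$ if and only if $T \cap (V\setminus K) = \emptyset$, which is precisely the adjacency relation in the Kneser graph $KG(t,v)$. Hence $A_{t,v}$ is obtained from $W_{t,\, v-t}$ by this column permutation, so the two matrices have the same rank.

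Next, I would plug $k := v-t$ into Theorem \ref{gottlieb-kantor}. The hypothesis $t \leq \min(k, v-k)$ reads $t \leq \min(v-t, t) = t$, which holds exactly when $t \leq v/2$. The conclusion is that $W_{t,\, v-t}$ has full row rank over $\Q$. But $W_{t,\, v-t}$ is a square matrix of size $\binom{v}{t} \times \binom{v}{t}$, so full row rank is equivalent to non-singularity. Transferring through the column permutation of the previous paragraph yields non-singularity of $A_{t,v}$, which is Theorem \ref{Ka}.

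To justify the word \emph{equivalent} one should also derive Theorem \ref{gottlieb-kantor} from Theorem \ref{Ka}. Here the tool is the classical identity
\[
W_{t,k}\, W_{k,\, v-t} \;=\; \binom{v-2t}{k-t}\, W_{t,\, v-t},
\]
obtained by counting, for a $t$-subset $T$ and a $(v-t)$-subset $T'$, the $k$-subsets $K$ with $T \subseteq K \subseteq T'$. Under $t \leq \min(k,v-k)$ one has $t \leq v/2$ (so the right-hand matrix is non-singular by Theorem \ref{Ka}, after the same column relabel) and $v-2t \geq k-t \geq 0$ (so the binomial coefficient is a positive integer), whence the product has full rank $\binom{v}{t}$, forcing $W_{t,k}$ to have full row rank.

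I do not foresee any genuine obstacle: the entire argument is a dictionary translation between two formulations, and the only real idea is the complementation bijection $K \mapsto V\setminus K$ that identifies $A_{t,v}$ with $W_{t,\, v-t}$.
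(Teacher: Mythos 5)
Your proposal is correct and is exactly the paper's (implicit) argument: the paper merely remarks that for $k=v-t$ the matrix $W_{t\,k}$ becomes $A_{t,v}$ up to relabelling the columns by complementation, and the hypothesis $t\leq\min(k,v-k)$ becomes $t\leq v/2$, so non-singularity of the square matrix is just full row rank. Your extra paragraph deriving Theorem~\ref{gottlieb-kantor} back from Theorem~\ref{Ka} via the identity $W_{t,k}\,W_{k,\,v-t}=\binom{v-2t}{k-t}W_{t,\,v-t}$ is a correct justification of the word ``equivalent'' that the paper does not bother to spell out, but it is not needed for the statement itself.
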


Applications to graphs and relational structures where given in  \cite{Fr2}  and \cite{Pm}. Let us explain why the use of this result in our context is natural.\\
Let $X_1,\ldots ,X_r$ be an
enumeration of the   $h$-element subsets of $V$; let $K_1,\ldots ,K_{s}$ be an enumeration of the   $k$-element
subsets of $V$ and  $W_{h\; k}$ be the matrix of the $h$-element subsets versus the $k$-element subsets.  If ${\mathcal H}$  is a $h$-uniform hypergraph with vertex set $V$, let $w_{\mathcal H}$ be the row matrix $(g_1,\ldots , g_r)$ where $g_i=1$ if  $X_i$ is a hyperedge of ${\mathcal H}$,  $0$ otherwise.   We have
   $w_{\mathcal H}W_{h\; k}=(e({\mathcal H}_{\restriction K_1}),\ldots , e({\mathcal H}_{\restriction K_s}))$. Thus, if ${\mathcal H}$ and ${\mathcal H}'$ are two hypergraphs with vertex set $V$ such that  ${\mathcal H}_{\restriction K}$ and ${\mathcal H}'_{\restriction K}$ have the same number of hyperedges  for every $k$-element subset of $V$, we have  $(w_{\mathcal H}-w_{{\mathcal H}'})W_{h\; k}=0$. Thus, provided that $v\geq k+h$, by Theorem \ref{gottlieb-kantor}, $w_{\mathcal H}-w_{{\mathcal H}'}=0$ that is ${\mathcal H}={\mathcal H}'$.
   
   \begin{proposition} \label {thm:down-k-iso}
Let $v,k$ be non-negative integers, Let  $t \leq min{(k,  v-k)}$ and ${\mathcal H}$ and ${\mathcal H}'$ be two  $h$-uniform hypergraphs on the same set $V$ of $v$ vertices.  If ${\mathcal H}$ and ${\mathcal H}'$ are $k$-hypomorphic up to complementation then they are $t$-hypomorphic up to complementation.
\end{proposition}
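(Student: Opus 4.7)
The plan is to combine two ingredients: (i) the invariance of certain ``subtype counts'' under isomorphism up to complementation, and (ii) the invertibility of the incidence matrix provided by Theorem~\ref{gottlieb-kantor}. This parallels the graph case treated as Proposition~2.4 of \cite{dlps1}.

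First, I would record the setup. Isomorphism up to complementation is easily seen to be an equivalence relation on $h$-uniform hypergraphs (complementation is an involution commuting with isomorphism), and it commutes with restriction: $(\overline{\mathcal G})_{\restriction T} = \overline{\mathcal G_{\restriction T}}$ whenever $T\subseteq V(\mathcal G)$. For each equivalence class $\tau$ of $h$-uniform hypergraphs on $t$ vertices and each $K\in [V]^k$, put
\[
n_\tau(\mathcal H, K) \; :=\; \vert \{T\in [K]^t : \mathcal H_{\restriction T} \in \tau\}\vert .
\]
The key observation is that $n_\tau(\mathcal H, K)$ depends only on the class of $\mathcal H_{\restriction K}$ up to complementation: any bijection $\sigma : K\to K$ witnessing either $\mathcal H_{\restriction K}\cong \mathcal H'_{\restriction K}$ or $\mathcal H_{\restriction K}\cong \overline{\mathcal H'_{\restriction K}}$ permutes $[K]^t$ and, by the commutation of restriction with complementation, identifies the class (up to complementation) of $\mathcal H_{\restriction T}$ with that of $\mathcal H'_{\restriction \sigma(T)}$. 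Hence the $k$-hypomorphy-up-to-complementation hypothesis yields $n_\tau(\mathcal H, K)=n_\tau(\mathcal H', K)$ for every $K\in [V]^k$ and every class $\tau$.

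Second, I would extract the conclusion by linear algebra. Fix $\tau$ and define $y\in \mathbb Q^{[V]^t}$ by $y_T := \mathbf 1[\mathcal H_{\restriction T}\in \tau] - \mathbf 1[\mathcal H'_{\restriction T}\in \tau]$. The equalities above rearrange to $y^{\top} W_{t,k}=0$, since $(y^{\top}W_{t,k})_K = \sum_{T\in [K]^t} y_T = n_\tau(\mathcal H,K) - n_\tau(\mathcal H',K)$. Because $t\le \min(k,v-k)$, Theorem~\ref{gottlieb-kantor} guarantees that $W_{t,k}$ has full row rank over $\mathbb Q$, so its left null space is trivial and $y=0$. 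Letting $\tau$ range over all classes then shows that $\mathcal H_{\restriction T}$ and $\mathcal H'_{\restriction T}$ lie in the same class for every $T\in [V]^t$, which is exactly $t$-hypomorphy up to complementation.

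The main point to check with care is the invariance of $n_\tau(\cdot,K)$: one must verify that the ``complement'' branch of the hypothesis on $\mathcal H_{\restriction K}$ acts on each $t$-restriction by outright complementation, producing the same class $\tau$ rather than some shifted variant. This is precisely where the convention that the complement of an $h$-uniform hypergraph is again $h$-uniform, together with $(\overline{\mathcal G})_{\restriction T} = \overline{\mathcal G_{\restriction T}}$, does the work; once this is recorded, the rest is a direct transcription of the Gottlieb-Kantor argument used for graphs in \cite{dlps1}.
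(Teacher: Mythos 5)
Your proposal is correct and follows essentially the same route as the paper: the paper fixes a hypergraph $\mathcal G$ on $t$ vertices, forms the indicator vector $w_{\mathcal G,\mathcal H}$ of the set $Isc(\mathcal G,\mathcal H)$ of $t$-subsets whose restriction is isomorphic to $\mathcal G$ or $\overline{\mathcal G}$ (your class $\tau$), deduces $w_{\mathcal G,\mathcal H}W_{t\,k}=w_{\mathcal G,\mathcal H'}W_{t\,k}$ from the hypothesis, and concludes by the full row rank of $W_{t\,k}$ from Theorem~\ref{gottlieb-kantor}. Your writeup merely makes explicit the invariance/counting step that the paper leaves implicit.
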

\begin{proof} Let ${\mathcal G}$ be a hypergraph on $t$ vertices. Set $Is ({\mathcal G}, {\mathcal H}):= \{L\subseteq V: {\mathcal H}_{\restriction L}\simeq {\mathcal G}\}$,  $Isc({\mathcal H},{\mathcal G}):=Is({\mathcal G},H)\cup Is(\overline {\mathcal G},{\mathcal H})$ and $w_{{\mathcal G},{\mathcal H}}$ the $0-1$-row vector indexed by the $t$-element subsets $X_1,\ldots, X_r$ of $V$ whose coefficient of $X_i$ is $1$ if $X_i\in Isc({\mathcal G},{\mathcal H})$ and $0$ otherwise.  From our  hypothesis, it follows that  $w_{{\mathcal G},{\mathcal H}} W_{t \; k}= w_{{\mathcal G},{\mathcal H}'} W_{t \; k}$. From Theorem \ref{gottlieb-kantor}, this implies $w_{{\mathcal G},{\mathcal H}}= w_{{\mathcal G},{\mathcal H}'}$ that is $Isc({\mathcal G},{\mathcal H})=Isc({\mathcal G},H')$. Since this equality holds  for all hypergraphs ${\mathcal G}$ on $t$-vertices, the conclusion of the proposition follows.
\end{proof}\\

In particular, two $h$-uniform hypergraphs ${\mathcal H}$ and ${\mathcal H}'$ on the  same set $V$ of vertices, $\vert V\vert \geq 2k-1$, are $k'$-hypomorphic  up to complementation for every $k'\leq k$ provided that there are  $k$-hypomorphic up to complementation.\\

A fundamental result, due to R.M.Wilson \cite{W}, is the following.

\begin{theorem} (R.M. Wilson \cite{W}) \label{thm Wilson} For $t\leq min{(k,v-k)}$, the rank of $W_{t\; k}$ modulo a prime $p$ is
$$ \sum  {v  \choose i}-  {v\choose
i - 1}
$$
where the sum is extended over those indices $i$, $0\leq i\leq t$,  such that $p$
does not divide the binomial coefficient ${k-i \choose
t-i}$.
\end{theorem}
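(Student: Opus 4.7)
The plan is to exploit the $\mathrm{Sym}(V)$-equivariance of $W_{t,k}$ together with an integral refinement of the Specht decomposition of $M^{t}:=\mathbb{Q}^{\binom{V}{t}}$, so that $\mathrm{rank}_{\mathbb{F}_p} W_{t,k}$ can be read off one isotypic component at a time. Under the natural $\mathrm{Sym}(V)$-action, Young's rule for two-row partitions gives the multiplicity-free decomposition
$$M^{t} = \bigoplus_{i=0}^{t} U_i^{(t)}, \qquad \dim U_i^{(t)} = \binom{v}{i}-\binom{v}{i-1},$$
with each $U_i^{(t)}\cong S^{(v-i,i)}$ irreducible. Because $W_{t,k}\colon M^{k}\to M^{t}$ commutes with $\mathrm{Sym}(V)$, Schur's lemma forces it to map $U_i^{(k)}$ to $U_i^{(t)}$ by a single scalar $\lambda_i(t,k)$. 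A direct computation---cleanest when carried out using the $\mathfrak{sl}_2$-triple $(\partial,\delta,h)$ on $\bigoplus_j M^{j}$, where $\delta(I)=\sum_{x\notin I}(I\cup\{x\})$, $\partial$ is its transpose, and $[\partial,\delta]=(v-2j)\,\mathrm{Id}$ on $M^j$---yields $\lambda_i(t,k)=\binom{k-i}{t-i}$. Since these scalars are positive for every $i\le t\le k$, one recovers the rational full row rank of Theorem \ref{gottlieb-kantor}.

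To pass to $\mathbb{F}_p$ I would next construct an integral filtration
$$0 = L_{t+1}\subset L_{t}\subset\cdots\subset L_{0} = \mathbb{Z}^{\binom{V}{t}}$$
whose subquotients $L_i/L_{i+1}$ are free $\mathbb{Z}$-modules of rank $\binom{v}{i}-\binom{v}{i-1}$, with $W_{t,k}$ sending $L_i$ into the analogous $L_i^{(k)}\subset\mathbb{Z}^{\binom{V}{k}}$ and inducing \emph{exactly} multiplication by the integer $\binom{k-i}{t-i}$ on each quotient. Concretely, $L_i:=\bigoplus_{j=i}^{t}\delta^{t-j}(D_j)$, where $D_j\subset\mathbb{Z}^{\binom{V}{j}}$ is the integral lattice of ``null $j$-designs'' (integer cochains annihilated by $\partial$), which furnishes an explicit $\mathbb{Z}$-basis for each Specht isotypic component. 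Reducing the filtration modulo $p$ and summing the contributions of those subquotients whose scalar is non-zero in $\mathbb{F}_p$ then gives precisely
$$\mathrm{rank}_{\mathbb{F}_p}W_{t,k} = \sum_{\substack{0\le i\le t\\ p\,\nmid\,\binom{k-i}{t-i}}}\left(\binom{v}{i}-\binom{v}{i-1}\right).$$

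The main obstacle is the passage from the rational to the integral picture: Schur's lemma is unavailable over $\mathbb{Z}$, so one must show by combinatorial means that each Specht isotypic component admits a $\mathbb{Z}$-basis of null designs and that $W_{t,k}$ truly acts on the corresponding subquotient by a single scalar, rather than by a matrix whose Smith normal form only happens to reduce to a scalar modulo $p$. Wilson's explicit construction of null-design bases at every level is the technical heart of the proof; once it is in place, the eigenvalue calculation and the mod-$p$ bookkeeping are essentially mechanical.
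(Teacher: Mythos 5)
The paper does not prove Theorem~\ref{thm Wilson}: it is quoted from Wilson \cite{W} and used as a black box, so there is no internal argument to compare yours against. Judged on its own terms, your sketch has the correct rational skeleton --- the multiplicity-free decomposition $M^{t}=\bigoplus_{i=0}^{t}S^{(v-i,i)}$, Schur's lemma, and the $\mathfrak{sl}_2$ commutation relation are indeed the standard way to see that $W_{t\,k}$ acts isotypically, and this recovers Theorem~\ref{gottlieb-kantor}. But the integral step, which you correctly identify as the heart of the matter, does not work with the specific lattices you propose.

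Two concrete failures. First, $L_{0}:=\bigoplus_{j=0}^{t}\delta^{t-j}(D_{j})$ is a \emph{proper} finite-index sublattice of $\Z^{\binom{v}{t}}$, not the whole lattice: already for $t=1$ one has $\delta(D_{0})\oplus D_{1}=\Z\mathbf{1}\oplus\{x:\sum_{i}x_{i}=0\}$, which has index $v$ in $\Z^{v}$ (and higher powers of $\delta$ introduce factorial factors besides). When $p$ divides this index, reducing your filtration modulo $p$ computes the rank of $W_{t\,k}$ on a sublattice, which can differ from $\mathrm{rank}_{\mathbb{F}_p}W_{t\,k}$. Second, the scalar is normalization-dependent, and with your normalization it is wrong: writing $W_{t\,k}=\partial^{k-t}/(k-t)!$ and using $\partial\delta^{m}d=m(v-2i-m+1)\delta^{m-1}d$ for a null $i$-design $d$, one finds that $\delta^{k-i}d$ is sent to $\binom{k-i}{t-i}\cdot\frac{(v-i-t)!}{(v-i-k)!}\,\delta^{t-i}d$; the extra factor is invisible over $\Q$ but not modulo $p$. (Even with a corrected filtration, graded pieces only bound the rank of a filtered map from \emph{below}; you need a genuine block-diagonal splitting over $\Z_{(p)}$, or a separate upper bound via a kernel count.) Wilson's actual proof sidesteps all of this: he exhibits explicit unimodular matrices $E,F$, built from a combinatorially chosen basis indexed by pairs of sets rather than from null designs pushed up by $\delta$, such that $EW_{t\,k}F$ is literally diagonal with entries $\binom{k-i}{t-i}$ of multiplicity $\binom{v}{i}-\binom{v}{i-1}$, after which the mod-$p$ rank is immediate. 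Your route can in principle be repaired --- replace $\delta^{t-j}(D_{j})$ by pure (direct-summand) sublattices and renormalize so that every elementary divisor of the induced map on the $i$-th subquotient equals $\binom{k-i}{t-i}$ --- but that repair is precisely the content of Wilson's construction, so as written the proposal assumes what it needs to prove.
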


In the statement of the theorem,  ${v \choose -1}$
should be interpreted as zero.\\

We will apply Wilson's theorem to $h$-uniform hypergraphs, for $p=2$, $t=h$, $k=2rh$ where $r$ is a positive integer.
In this case we will obtain that the rank of  $W_{h\; k}$ $(mod \ 2)$ is
${v  \choose h} -1$.

Let $n,p$ be positive integers, the decomposition of $n=\sum_{i=0}^{n(p)} n_i p^i$ in the basis $p$ is also denoted $[n_0,n_1,\dots ,n_{n(p)}]_p$  where $n_{n(p)}\neq 0$ if and only if $n\neq 0$.

\begin{theorem} (Lucas's Theorem  \cite{Lucas})    \label{lucas}
Let $p$ be a prime number, $t,k$ be   positive integers,
$t\leq k$, $t=[t_0,t_1,\dots ,t_{t(p)}]_p$ and $k=[k_0,k_1,\dots ,k_{k(p)}]_p$. Then
$${k  \choose t} = \prod_{i=0}^{t(p)} {k_i  \choose t_i} \  (mod \ p),\ \mbox{where}  \
{k_i  \choose t_i} =0\ \mbox{if} \ t_i>k_i.$$
\end{theorem}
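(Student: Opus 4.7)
The plan is to deduce Lucas's theorem from the single polynomial identity $(1+x)^p \equiv 1+x^p \pmod{p}$ in $\mathbb{F}_p[x]$; this is the only substantive algebraic input, and the rest is bookkeeping with base-$p$ expansions.

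First I would verify the base identity. Since $\binom{p}{j}=\frac{p!}{j!(p-j)!}$ has a factor of $p$ in its numerator but none in its denominator when $1\le j\le p-1$, each such coefficient vanishes modulo $p$, so the binomial expansion of $(1+x)^p$ collapses to $1+x^p$ in $\mathbb{F}_p[x]$. A trivial induction on $i$, using $(1+x)^{p^{i+1}}=((1+x)^{p^i})^p$, then gives
\[ (1+x)^{p^i} \equiv 1 + x^{p^i} \pmod{p} \quad \text{for every } i\ge 0. \]

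Next I would exploit the base-$p$ expansion $k=\sum_{i=0}^{k(p)} k_i p^i$ to compute
\[ (1+x)^k = \prod_{i=0}^{k(p)} \bigl((1+x)^{p^i}\bigr)^{k_i} \equiv \prod_{i=0}^{k(p)} (1+x^{p^i})^{k_i} \pmod{p}. \]
Applying the ordinary binomial theorem to each factor gives
\[ \prod_{i=0}^{k(p)} \sum_{s_i=0}^{k_i} \binom{k_i}{s_i}\, x^{s_i p^i}. \]
When this product is expanded, the monomial $x^t$ arises from tuples $(s_i)$ with $0\le s_i\le k_i<p$ and $\sum_i s_i p^i = t$. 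By uniqueness of the base-$p$ expansion of $t$, the only admissible tuple is $s_i=t_i$ for every $i$; and if some digit $t_i$ exceeds $k_i$, no tuple is admissible at all, which matches the convention $\binom{k_i}{t_i}=0$.

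Equating the coefficient of $x^t$ on both sides yields $\binom{k}{t}\equiv\prod_{i}\binom{k_i}{t_i}\pmod{p}$, which is exactly the statement. No genuine obstacle arises: the only subtlety is the uniqueness-of-digits step used to pick out which tuples $(s_i)$ contribute to $x^t$, and this is immediate once one observes that each $s_i$ is automatically a legal base-$p$ digit.
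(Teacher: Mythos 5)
Your proof is correct and complete: the collapse $(1+x)^{p^i}\equiv 1+x^{p^i}\pmod p$, the factorization of $(1+x)^k$ along the base-$p$ digits of $k$, and the uniqueness-of-digits argument for extracting the coefficient of $x^t$ (including the vanishing case $t_i>k_i$) are all handled properly. The paper itself gives no proof of this classical theorem, only citing Lucas and pointing to Fine's note for an elementary argument; your generating-function proof is the standard one and is essentially the argument in that cited reference, so there is nothing to reconcile.
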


For an elementary proof of  Theorem \ref{lucas}, see Fine \cite{Fine}.
We will use the following well-known consequences  of Theorem \ref{lucas}.

\begin{corollary}\label{cor-lucas}
\begin{enumerate}
\item Let $p$ be a prime and $t,k$ be   positive integers, 
$t\leq k$, let $t=[t_0,t_1,\dots ,t_{t(p)}]_p$ and $k=[k_0,k_1,\dots ,k_{k(p)}]_p$. 
\begin{enumerate}
 \item Then $p \vert {k  \choose t}$ if and only if there is $i\in \{0,1,\dots , t(p)\}$ such that $t_i>k_i$.
\item ${k \choose t}$  is odd if, and only if, for all integer $i\in \{0,1,\dots , t(2)\}$, $t_i = 1 \Rightarrow  k_i = 1$.
\end{enumerate}
\item Let $v\geq 2$. Then $v$ is a power of $2$
if, and only if, ${v\choose k}$  is even for all $k\in \{1,\ldots ,v-1\}$.
\end{enumerate}
\end{corollary}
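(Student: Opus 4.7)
The plan is to derive all three statements as immediate, essentially formal consequences of Lucas's theorem, so nothing here should be genuinely difficult; the only task is to unwind the congruence carefully.

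For (1)(a), I would start directly from Lucas: $\binom{k}{t}\equiv \prod_{i=0}^{t(p)}\binom{k_i}{t_i}\pmod{p}$. Each factor $\binom{k_i}{t_i}$ is either $0$ (precisely when $t_i>k_i$, by the stated convention) or a positive integer with $0<\binom{k_i}{t_i}\le \binom{p-1}{t_i}<p$, hence not divisible by $p$. Since $p$ is prime, the product is $\equiv 0\pmod p$ if and only if some factor vanishes, i.e.\ some $t_i>k_i$. This gives (1)(a). Part (1)(b) is then just the special case $p=2$: each digit lies in $\{0,1\}$ and $\binom{k_i}{t_i}=0$ precisely when $(t_i,k_i)=(1,0)$, which is the negation of the implication $t_i=1\Rightarrow k_i=1$.

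For (2) I would prove both directions using (1)(b). If $v=2^{\ell}$, then the binary expansion of $v$ has a single $1$ at position $\ell$. For any $k$ with $1\le k\le v-1$, we have $k<2^{\ell}$, so every digit $k_i$ with $i\ge \ell$ is $0$; since $k\ge 1$, some digit $k_{i_0}=1$ with $i_0<\ell$, and at this position $v_{i_0}=0$. By (1)(b) $\binom{v}{k}$ is even. Conversely, suppose $v$ is not a power of $2$; then its binary expansion contains at least two digits equal to $1$, at some positions $i_0<i_1$. Put $k:=2^{i_0}$. Then $1\le k< 2^{i_1}\le v$, so $1\le k\le v-1$; moreover the only $1$-digit of $k$ lies at position $i_0$, where $v$ also has a $1$, so by (1)(b) $\binom{v}{k}$ is odd, contradicting the hypothesis. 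This yields (2).

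There is no substantive obstacle: the whole corollary reduces to keeping track of digits. The only small point worth being explicit about is that in (1)(a) we legitimately extend the digit product to indices $i>t(p)$ by noting that $t_i=0$ there, so $\binom{k_i}{0}=1$, and such factors do not affect the product; this ensures that restricting the range of $i$ to $\{0,\dots,t(p)\}$ in the statement is harmless.
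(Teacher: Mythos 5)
Your overall derivation is the standard one and is consistent with the paper, which states this corollary without proof as a well-known consequence of Lucas's theorem; your argument is exactly the expected unwinding of the digit product, and parts (1)(b) and (2) are correct as written.

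There is, however, one false step in your justification of (1)(a). You argue that a nonvanishing factor $\binom{k_i}{t_i}$ is not divisible by $p$ via the chain $0<\binom{k_i}{t_i}\le\binom{p-1}{t_i}<p$, but the last inequality fails in general: for $p=5$ and $t_i=2$ one has $\binom{4}{2}=6>5$. The fact you need is still true, and the correct one-line reason is the factorial identity $t_i!\,(k_i-t_i)!\,\binom{k_i}{t_i}=k_i!$ together with $k_i\le p-1$: since $p$ is prime it does not divide $k_i!$, hence it cannot divide $\binom{k_i}{t_i}$. With that repair, the rest of your argument --- primality of $p$ forcing the product to vanish modulo $p$ exactly when some factor vanishes, the specialization to $p=2$ for (1)(b), and both directions of (2) via the binary digits of $v$ and the choice $k=2^{i_0}$ --- is sound, as is your remark that extending the product to indices $i>t(p)$ only contributes factors $\binom{k_i}{0}=1$.
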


\begin{corollary}\label{rank} Let $h$ and $k$ be positive integers.
 Then  
the rank of $W_{h\; k}$ modulo $2$ is  ${v\choose h}-1$ if and only if  $h$ is a power of $2$ and $k=2rh$ where $r$ is a positive integer.
\end{corollary}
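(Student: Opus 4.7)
The plan is to apply Wilson's theorem (Theorem~\ref{thm Wilson}) with $t = h$ and $p = 2$, which expresses the rank of $W_{h\; k}$ modulo $2$ as $\sum_{i \in I}\bigl(\binom{v}{i} - \binom{v}{i-1}\bigr)$, where $I := \{i \in \{0,\dots,h\} : \binom{k-i}{h-i}\text{ is odd}\}$. The full sum over $i \in \{0,\dots,h\}$ telescopes to $\binom{v}{h}$; its $i = 0$ summand equals $1$; and a short computation shows $\binom{v}{i} - \binom{v}{i-1} \geq 2$ for $1 \leq i \leq h$ whenever $v \geq 2h + 1$. This last hypothesis follows from the Wilson constraint $v \geq k + h$ once $k > h$ (the edge case $k = h$ is harmless, since $W_{h\; k}$ is then the identity, of rank $\binom{v}{h}$). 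Hence rank $= \binom{v}{h} - 1$ is equivalent to $I = \{1,\dots,h\}$, i.e.\ to the pair of conditions
\begin{itemize}
\item[(a)] $\binom{k}{h}$ is even, and
\item[(b)] $\binom{k - i}{h - i}$ is odd for every $i \in \{1,\dots,h-1\}$
\end{itemize}
(the value $i = h$ being automatic).

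Next I would translate (a) and (b) into binary arithmetic via Corollary~\ref{cor-lucas}. Writing $k - i = (h - i) + (k - h)$ and using the carry-free characterisation of odd binomial coefficients, (b) becomes: the binary support of $k - h$ is disjoint from that of every $j \in \{1,\dots,h-1\}$; while (a) becomes: the binary support of $k - h$ meets that of $h$.

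For the forward implication, let $q$ be the position of the lowest bit of $k - h$. If $2^q < h$ then $j = 2^q$ lies in $\{1,\dots,h-1\}$ and shares its unique bit with $k - h$, violating (b); hence $2^q \geq h$, so every bit of $k - h$ lies at a position $\geq \lceil \log_2 h \rceil$. Since every bit of $h$ lies at a position $\leq \lfloor \log_2 h \rfloor$, condition (a) forces $\lceil \log_2 h \rceil = \lfloor \log_2 h \rfloor$, i.e.\ $h$ is a power of $2$, say $h = 2^{\ell}$, with the shared bit at position $\ell$. Consequently $q = \ell$ and $k - h = 2^{\ell}(2m + 1)$ for some $m \geq 0$, giving $k = 2^{\ell + 1}(m + 1) = 2rh$ with $r \geq 1$. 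The converse is the mirror verification: for $h = 2^{\ell}$ and $k = 2^{\ell + 1} r$, the number $k - h = 2^{\ell}(2r - 1)$ has support in positions $\geq \ell$ containing $\ell$, so (a) and (b) both hold.

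The main technical hurdle is isolating the rank drop of exactly $1$ in the first step, which depends on the elementary lower bound $\binom{v}{i} - \binom{v}{i-1} \geq 2$ for $1 \leq i \leq h$ and $v \geq 2h + 1$. Once the Wilson regime secures this inequality, the remainder of the proof is bookkeeping with binary digits and Corollary~\ref{cor-lucas}.
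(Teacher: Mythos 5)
Your proof is correct, and it rests on the same two pillars as the paper's (Wilson's theorem with $p=2$, $t=h$, followed by Lucas), but the combinatorial core is organized differently. Where the paper compares the binary digits of $h$ and $k$ directly --- first extracting the parity of $h$ and $k$ from identities like $h\binom{k}{h}=k\binom{k-1}{h-1}$, then running a digit-by-digit contradiction on the first position where $h$ and $k$ differ --- you recast conditions (a) and (b) in terms of the binary support of $k-h$ and the carry-free (Kummer-style) criterion, so that the forward direction collapses to two observations about the lowest set bit of $k-h$. This is genuinely slicker, though you should note that the carry-free characterisation is a (standard, one-line) consequence of Corollary~\ref{cor-lucas}(1b) rather than its literal statement, so a careful write-up would include that reduction. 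Your version also buys something the paper leaves implicit: the paper merely asserts that the rank equals $\binom{v}{h}-1$ iff $\binom{k}{h}$ is even and $\binom{k-i}{h-i}$ is odd for $1\le i\le h$, whereas you justify it by the telescoping identity together with the bound $\binom{v}{i}-\binom{v}{i-1}\ge 2$ for $1\le i\le h$ and $v\ge 2h+1$ (valid in the Wilson regime $v\ge k+h$ once $k>h$), which is exactly what is needed to rule out any other index set achieving a rank deficit of one. The only caveat is that this step, and Wilson's theorem itself, require $v\ge k+h$, a hypothesis absent from the corollary's statement but clearly intended; both your argument and the paper's use it silently in the same way.
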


\begin{proof} First we prove the converse implication. We have $h=2^{\ell}$ for some integer $\ell$ and $r=\sum_{j=0}^{t} \varepsilon_j 2^{j}$ for some $t$ with $\varepsilon_t=1$, then $k=\sum_{j=0}^{t} \varepsilon_j 2^{\ell +j+1}$. Let $s$ be the first integer $j$ such that   $\varepsilon_j\neq 0$. 
We have $2^{\ell +s+1}=2^{l}+\sum_{p=0}^{s} 2^{l+p}$.
For $i\leq h$, ${{k-i}  \choose {h-i}}={{(2^{l}-i)+\sum_{p=0}^{s} 2^{l+p}+ \sum_{j=s+1}^{t} \varepsilon_j 2^{\ell +j+1}}  \choose {2^{l}-i}}$. 
Applying Lucas's theorem, ${{k}  \choose {h}}$ is even and ${{k-i}  \choose {h-i}}$ is odd if $i\neq 0$. 
Now by Wilson's theorem,  
the rank of $W_{h\; k}$ modulo $2$ is  ${v\choose h}-1$.  \\
Let us prove the direct implication. 
Note that the rank of $W_{h\; k}$ modulo $2$ is  ${v\choose h}-1$ if and only if  ${k-i \choose
h-i}$ is odd for all $i\in \{1,\ldots ,h\}$ and ${k \choose
h}$ is even, in particular $k>h$. 
We have $h{k\choose h}=k{{k-1}\choose {h-1}}$. Since ${k\choose h}$ is even and  ${{k-1}\choose {h-1}}$ is odd then $k$ is even. 
Now  $(h-1){{k-1}\choose {h-1}}= (k-1){{k-2}\choose {h-2}}$. 
Since ${{k-1}\choose {h-1}}$ and ${{k-2}\choose {h-2}}$ are odd then $h$ and $k$ have the same parity. So $h$ and $k$ are even.\\
We have $h=[h_0,h_1,\dots ,h_{h(2)}]_2$ and $k=[k_0,k_1,\dots ,k_{k(2)}]_2$ with $h_0=k_0=0$.\\
First we prove that for all $i< h_{h(2)}$, $h_i=k_i$. By contradiction, let $j$ be the first integer $i\geq 1$ such that $h_i\neq k_i$. If $h_j=0$ and $k_j=1$, then $(h-2^{j})_j=1$ and $(k-2^{j})_j=0$, this contradicts the fact that ${{k-2^{j}}\choose {h-2^{j}}}$ is odd. So $h_j=1$ and $k_j=0$. Since ${{k-2^{h(2)}}\choose {h-{2^{h(2)}}}}$ is odd then  $k_{h(2)}=0$. 
Let $n$ be the first integer $m> h(2)$ such that $k_m\neq 0$. We have 
$2^{n}-2^{h(2)} =2^{h(2)} + 2^{h(2)+1}+\ldots +2^{n-1}$. 
Then $(k-2^{h(2)})_{h(2)}=1$ and $(h-2^{h(2)})_{h(2)}=0$. That contradicts  the fact that
	${k-2^{{h(2)}}\choose {h-2^{{h(2)}}}}$ is odd. 
So we have proved that for all $i< h_{h(2)}$, $h_i=k_i$. Then, since ${k\choose h}$ is even, we have  $k_{h(2)}=0$. Thus, for all $i< h_{h(2)}$, $h_i=k_i=0$ since 
	  ${{k-2^i}\choose {h-2^i}}$ is odd. 
	 That gives $h=2^{\ell}$ for some integer $\ell$ and $k=\sum_{i=\ell +1}^{t} k_i2^i$ for some integer $t$. Then $k=2rh$ where $r$ is a positive integer.
\end{proof}

\begin{proposition}\label{$k=2h$ et $v=3h$ parite} Let $h$ be a power of $2$, $k=2rh$ where $r$ is a positive integer, and ${\mathcal H}$ and ${\mathcal H}'$ be two $h$-uniform hypergraphs   on the same set $V$ of $v\geq k+h$ vertices.  
Then the following properties are equivalent:\\
(i) $e({\mathcal H}_{\restriction K})$ and   $e({\mathcal H}'_{\restriction K})$ have the same parity  for  all $k$-element subsets $K$ of $V$;\\
(ii) ${\mathcal H}'= {\mathcal H}$ or ${\mathcal H}'= \overline {\mathcal H}$.
\end{proposition}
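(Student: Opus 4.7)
The plan is to encode condition (i) in linear-algebra terms over $\mathbb{F}_2$ and then invoke Corollary \ref{rank}. Let $w_{\mathcal H}$ and $w_{{\mathcal H}'}$ be the $0$--$1$ row vectors indexed by $[V]^h$ whose entry at $X$ is $1$ iff $X$ is a hyperedge. As already recorded in the paragraph preceding Proposition \ref{thm:down-k-iso}, the $K$-coordinate of $w_{\mathcal H} W_{h\,k}$ equals $e({\mathcal H}_{\restriction K})$. Thus (i) is equivalent to the congruence
\[
(w_{\mathcal H}-w_{{\mathcal H}'})\, W_{h\,k}\equiv 0 \pmod 2.
\]

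Next I would apply Corollary \ref{rank}. Since $h=2^\ell$ and $k=2rh$, and since $v\geq k+h$ gives $h\leq \min(k,v-k)$, the rank of $W_{h\,k}$ modulo $2$ equals $\binom{v}{h}-1$. Consequently the left kernel of $W_{h\,k}$ over $\mathbb{F}_2$ is one-dimensional; it suffices to exhibit one nonzero vector in it. I take the all-ones row vector $\mathbf{1}$: the $K$-coordinate of $\mathbf{1}\cdot W_{h\,k}$ counts the $h$-subsets of $K$, namely $\binom{k}{h}$. Because $h=2^\ell$ has a $1$ in bit position $\ell$ while $k=2^{\ell+1}r$ has a $0$ there, Lucas's theorem (Theorem \ref{lucas}) yields $\binom{k}{h}\equiv 0 \pmod 2$. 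So $\mathbf{1}$ lies in the left kernel mod $2$, and hence that kernel is exactly $\{0,\mathbf{1}\}$.

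Combining the two previous steps, $w_{\mathcal H}-w_{{\mathcal H}'}$ reduces mod $2$ either to $0$ or to $\mathbf{1}$. Since both $w_{\mathcal H}$ and $w_{{\mathcal H}'}$ already have entries in $\{0,1\}$, this forces $w_{\mathcal H}=w_{{\mathcal H}'}$ or $w_{\mathcal H}=\mathbf{1}-w_{{\mathcal H}'}$, i.e.\ ${\mathcal H}={\mathcal H}'$ or ${\mathcal H}=\overline{{\mathcal H}'}$. For the reverse implication (ii)$\Rightarrow$(i), the case ${\mathcal H}={\mathcal H}'$ is trivial, and in the case ${\mathcal H}=\overline{{\mathcal H}'}$ one has $e({\mathcal H}_{\restriction K})+e({\mathcal H}'_{\restriction K})=\binom{k}{h}$, which is even by the same Lucas computation; so the two counts share a parity.

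There is really no substantive obstacle here: Corollary \ref{rank} does the heavy lifting, and the only thing to verify by hand is that $\mathbf{1}$ is a nontrivial kernel element, which reduces to the evenness of $\binom{2rh}{h}$ when $h$ is a power of $2$. The mild point to watch is the bookkeeping: one must make sure that $v\geq k+h$ is the right hypothesis to license the rank statement (it gives $h\leq \min(k,v-k)$), and that condition (i) is genuinely being read over $\mathbb{F}_2$ rather than over $\mathbb{Z}$.
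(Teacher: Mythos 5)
Your proof is correct and follows essentially the same route as the paper: both arguments read condition (i) as saying that the mod-$2$ difference vector (the paper's Boolean sum $\mathcal H\dot{+}\mathcal H'$) lies in the left kernel of $W_{h\,k}$ over $\mathbb{F}_2$, invoke Corollary \ref{rank} to see that this kernel is one-dimensional, and identify it as $\{0,\mathbf{1}\}$ via the evenness of $\binom{k}{h}$. Your explicit checks that $v\geq k+h$ licenses the rank statement and that (ii)$\Rightarrow$(i) reduces to the same parity of $\binom{k}{h}$ are details the paper leaves implicit, but the substance is identical.
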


\begin{proof}
The implication $(ii)\Rightarrow (i)$ is trivial. We prove  $(i)\Rightarrow (ii)$.\\
We have $h=2^l$ for some integer $l$. 
Let $W_{h\; k}$ be the matrix defined page 3 and $^tW_{h\; k}$ its transpose.
Let $U:= {\mathcal H}\dot{+} {\mathcal H}'$. From the fact that $e({\mathcal H}_{\restriction K})$ and $e({\mathcal H}'_{\restriction K})$ have the same parity for  all $k$-element subsets $K$, the boolean sum $U$ belongs to the kernel of  $^tW_{h\; k}$ over the $2$-element field. 
By Corollary \ref{rank},  the rank of $W_{h\; k}$ modulo $2$ is  ${v\choose h}-1$.  
Then the kernel of  its transpose $^tW_{h\; k}$
 has dimension $1$. 
 Since $(1,\cdots ,1)W_{h\; k}=(0,\cdots ,0)$ $(mod \ 2)$
  then   $w_UW_{h\; k}=(0,\cdots ,0)$ $(mod \ 2)$ amounts to $w_U=(0,\cdots ,0)$ or  $w_U=(1,\cdots ,1)$, that is  $U$ is empty or complete, so  ${\mathcal H}'={\mathcal H}$ or ${\mathcal H}'=\overline {\mathcal H}$.
 \end{proof} 
  
  \begin{corollary} \label{$k=2h$ et $v=3h$} 
Let $h$ be a power of $2$, $k=2rh$ where $r$ is a positive integer, and   
 ${\mathcal H}$ and ${\mathcal H}'$ be two $h$-uniform hypergraphs on the same set $V$ of $v\geq k+h$ vertices. If  ${\mathcal H}$ and ${\mathcal H}'$ are $k$-hypomorphic up to complementation then ${\mathcal H}'= {\mathcal H}$ or ${\mathcal H}'= \overline {\mathcal H}$.
\end{corollary}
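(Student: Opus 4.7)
The plan is to reduce this corollary directly to Proposition \ref{$k=2h$ et $v=3h$ parite}. That proposition gives us the conclusion ${\mathcal H}' = {\mathcal H}$ or ${\mathcal H}' = \overline{{\mathcal H}}$ from the weaker hypothesis that $e({\mathcal H}_{\restriction K})$ and $e({\mathcal H}'_{\restriction K})$ share parity for every $k$-subset $K$. So all I need to check is that $k$-hypomorphy up to complementation, under the numerical constraints on $h$ and $k$, forces this parity agreement.

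First I would observe that if ${\mathcal H}_{\restriction K}$ is isomorphic to ${\mathcal H}'_{\restriction K}$, then $e({\mathcal H}_{\restriction K}) = e({\mathcal H}'_{\restriction K})$; and if ${\mathcal H}_{\restriction K}$ is isomorphic to $\overline{{\mathcal H}'_{\restriction K}}$, then
\[
e({\mathcal H}_{\restriction K}) = \binom{k}{h} - e({\mathcal H}'_{\restriction K}).
\]
Hence in both alternatives the two edge counts have the same parity provided that $\binom{k}{h}$ is even.

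The remaining point is thus to verify $\binom{k}{h} \equiv 0 \pmod 2$ whenever $h = 2^{\ell}$ and $k = 2rh$. By Lucas's theorem (Theorem \ref{lucas}), this amounts to a digit comparison in base $2$: the binary expansion of $h = 2^{\ell}$ has a single $1$ in position $\ell$, while $k = r \cdot 2^{\ell+1}$ has all zeros in positions $0, 1, \ldots, \ell$; in particular position $\ell$ of $k$ is $0$. Hence by Corollary \ref{cor-lucas}(1)(b), $\binom{k}{h}$ is even. (This fact is also established en route in the proof of Corollary \ref{rank}.) Applying Proposition \ref{$k=2h$ et $v=3h$ parite} then yields the conclusion. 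There is no serious obstacle here; the only care needed is to check that the hypothesis $v \geq k+h$ carries over to invoke the proposition, which it does by assumption.
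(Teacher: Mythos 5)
Your proof is correct and follows exactly the same route as the paper: check via Lucas's theorem (Corollary \ref{cor-lucas}) that $\binom{k}{h}$ is even, deduce that $k$-hypomorphy up to complementation forces $e({\mathcal H}_{\restriction K})$ and $e({\mathcal H}'_{\restriction K})$ to have the same parity for every $k$-subset $K$, and conclude by Proposition \ref{$k=2h$ et $v=3h$ parite}. The only difference is that you spell out the parity reduction in more detail than the paper does, which is harmless.
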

 
 \begin{proof} From Corollary \ref{cor-lucas}, ${{k}\choose h}$ is even. Then 
we conclude using Proposition \ref{$k=2h$ et $v=3h$ parite}.
 \end{proof}

\begin{theorem}\label{thm h=3, k=4;5} 
Two $(2^{\ell}+1)$-uniform hypergraphs ${\mathcal H}$ and ${\mathcal H}'$ on the  same set $V$ of $v$ vertices,  $v \geq 3. 2^{\ell}+2$ and $\ell\geq 1$, are equal up to complementation whenever ${\mathcal H}_{\restriction K}$ and ${\mathcal H}'_{\restriction K}$ have the same number of edges up to complementation for all $k$-element subsets $K$ of $V$ for $k\in \{2^{\ell +1},2^{\ell +1}+1\}$.
\end{theorem}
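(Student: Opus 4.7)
The plan is to translate the given ``same number of edges up to complementation'' hypothesis into a mod $2$ condition on the symmetric difference $U := \mathcal{H} \dot{+} \mathcal{H}'$, then to descend to the link hypergraphs, which are $(h-1)$-uniform with $h-1 = 2^{\ell}$ a power of $2$, so that Proposition \ref{$k=2h$ et $v=3h$ parite} applies directly.

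For the first step, write $h = 2^{\ell}+1 = [1,0,\ldots,0,1]_2$ (bits at positions $0$ and $\ell$). Lucas's theorem (Corollary \ref{cor-lucas}) shows that $\binom{2^{\ell+1}}{h}$ fails at bit $0$ and $\binom{2^{\ell+1}+1}{h}$ fails at bit $\ell$, so both are even. Since $\binom{k}{h}$ is even for $k \in \{2^{\ell+1},2^{\ell+1}+1\}$, the condition $e(\mathcal{H}_{|K}) \in \{e(\mathcal{H}'_{|K}), \binom{k}{h}-e(\mathcal{H}'_{|K})\}$ forces $e(\mathcal{H}_{|K}) + e(\mathcal{H}'_{|K}) \equiv 0 \pmod{2}$ in either case; hence $e(U_{|K}) \equiv 0 \pmod 2$ for every subset $K$ of cardinality $k$ or $k+1$.

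Next, fix $v_0 \in V$ and consider the link $\mathcal{L} := \mathrm{lk}_U(v_0) = \{X \setminus \{v_0\} : v_0 \in X \in U\}$, an $(h-1) = 2^{\ell}$-uniform hypergraph on $V \setminus \{v_0\}$. For every $(2^{\ell+1})$-subset $K \subseteq V \setminus \{v_0\}$, splitting the hyperedges of $U$ inside $K \cup \{v_0\}$ according to whether they contain $v_0$ gives
$$e(U_{|K \cup \{v_0\}}) = e(\mathcal{L}_{|K}) + e(U_{|K}),$$
and the two outer terms are even by the previous paragraph, so $e(\mathcal{L}_{|K})$ is even. Since $v-1 \geq 3 \cdot 2^{\ell} = 2^{\ell+1} + 2^{\ell}$, Proposition \ref{$k=2h$ et $v=3h$ parite}, applied to $\mathcal{L}$ and to the empty hypergraph on $V \setminus \{v_0\}$ (with uniformity $2^{\ell}$ and $k = 2 \cdot 2^{\ell}$), forces $\mathcal{L} = \emptyset$ or $\mathcal{L} = [V \setminus \{v_0\}]^{h-1}$.

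Finally I propagate. Let $B := \{v_0 \in V : \mathrm{lk}_U(v_0) = [V \setminus \{v_0\}]^{h-1}\}$. If $v_0 \in B$, then every $h$-subset of $V$ containing $v_0$ lies in $U$, so for any $v_1 \neq v_0$, choosing any $(h-2)$-subset $S \subseteq V \setminus \{v_0,v_1\}$ gives $(S \cup \{v_0\}) \cup \{v_1\} \in U$ and hence $S \cup \{v_0\} \in \mathrm{lk}_U(v_1)$; the dichotomy then upgrades $\mathrm{lk}_U(v_1)$ to the complete link, so $v_1 \in B$. Thus $B \in \{\emptyset, V\}$, giving $U = \emptyset$ or $U = [V]^h$, i.e. $\mathcal{H}' \in \{\mathcal{H}, \overline{\mathcal{H}}\}$. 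The main conceptual step is to recognize that the consecutive pair $\{2^{\ell+1}, 2^{\ell+1}+1\}$ is precisely what is needed to isolate the link at a vertex via the telescoping identity above; once this is observed, the $(h-1)$-uniform link problem plugs into the already established power-of-$2$ case, and the verification that $\binom{k}{h}$ is even for both values of $k$ (so that the hypothesis actually yields a mod $2$ constraint) is a direct application of Lucas.
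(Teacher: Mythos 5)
Your proof is correct, but its second half takes a genuinely different route from the paper's. Both arguments begin the same way: Lucas's theorem shows $\binom{k}{2^{\ell}+1}$ is even for $k\in\{2^{\ell+1},2^{\ell+1}+1\}$ (bit $0$ fails for the first, bit $\ell$ for the second, which is where $\ell\geq 1$ is used), so the hypothesis forces $e(U_{\restriction K})$ to be even for all $K$ of either size, where $U:={\mathcal H}\dot{+}{\mathcal H}'$. From there the paper stays entirely inside linear algebra over the two-element field: it computes via Wilson's theorem the ranks of $W_{2^{\ell}+1\;2^{\ell+1}}$ and $W_{2^{\ell}+1\;2^{\ell+1}+1}$ modulo $2$, exhibits the vectors $v_a$ ($a\in V$) as a basis of the larger kernel, and proves by a direct set-theoretic argument that the intersection of the two kernels is spanned by the all-ones vector, whence $w_U$ is $0$ or $(1,\dots,1)$. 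Your link reduction replaces all of that: the identity $e(U_{\restriction K\cup\{v_0\}})=e(\mathrm{lk}_U(v_0)_{\restriction K})+e(U_{\restriction K})$ converts the two parity conditions into a single parity condition on the $2^{\ell}$-uniform link, which is exactly the power-of-$2$ case already settled by Proposition \ref{$k=2h$ et $v=3h$ parite} (the numerics check: $v-1\geq 3\cdot 2^{\ell}+1>2^{\ell+1}+2^{\ell}$, so the proposition applies on $V\setminus\{v_0\}$ with $r=1$), and the empty/complete dichotomy for each link propagates to $U$ itself by your easy connectivity argument. What your approach buys is a conceptual explanation of why the two consecutive sizes are precisely the right hypothesis --- they isolate the link at a vertex --- together with an evident strategy of iterated links for the problem the authors pose immediately after the theorem (e.g.\ $h=6$ with $k=8,9,10$, where $\binom{8}{6},\binom{9}{6},\binom{10}{6}$ are all even). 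What the paper's computation buys is an explicit description of the kernel intersection, which is of independent interest and does not presuppose the power-of-$2$ case.
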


\begin{proof} 
Let $U:= {\mathcal H}\dot{+} {\mathcal H}'$. By Corollary \ref{cor-lucas},  ${2^{\ell +1}\choose {2^{\ell}+1}}$ and  ${{2^{\ell +1}+1}\choose {2^{\ell}+1}}$  are even. Then for 
$k\in \{2^{\ell +1},2^{\ell +1}+1\}$, $e({\mathcal H}_{\restriction K})$ and $e({\mathcal H}'_{\restriction K})$ have the same parity for all $k$-element subsets $K$ of $V$.
Hence $w_U$ belongs to the kernel of  $^tW_{2^{\ell}+1\; k}$ over the $2$-element field, for $k\in \{2^{\ell +1},2^{\ell +1}+1\}$. 
By Corollary \ref{cor-lucas} and Theorem \ref{thm Wilson}, 
the rank of 
$W_{2^{\ell}+1\; 2^{\ell +1}}$ modulo $2$ is  $v-1+{v\choose 2^{\ell}+1}-{v\choose 2^{\ell}}$, the rank of $W_{2^{\ell}+1\; 2^{\ell +1}+1}$ modulo $2$ is  ${v\choose 2^{\ell}+1}-v$, the second rank is obvious to obtain; the first one uses this: 
${2^{\ell +1}-i\choose {2^{\ell}+1-i}}$ is odd for $i=1, 2^{\ell}+1$; for $i$ even  ${2^{\ell +1}-i\choose {2^{\ell}+1-i}}$ is even; for $i$ odd, $3\leq i\leq 2^{\ell}-1$, set $j:=i-1$, then $j$ is even and $2\leq j\leq 2^{\ell}-2$, we have 
${2^{\ell +1}-i\choose {2^{\ell}+1-i}}= {{2^{\ell}+2^{\ell}-1-j}\choose {2^{\ell}-j}}$, 
${2^{\ell}-j}=\sum_{q=s}^t \theta_q2^q$ with $\theta_s = \theta_t=1$, thus ${2^{\ell}-1-j}=
\sum_{q=0}^{s-1} 2^q+\sum_{q=s+1}^t \theta_q2^q$, that shows that ${2^{\ell +1}-i\choose {2^{\ell}+1-i}}$ is even. Then   the dimension of $Ker(^tW_{2^{\ell}+1\;  2^{\ell +1}})$ is ${v\choose  2^{\ell}}-v+1$ and  the dimension of  $Ker(^tW_{2^{\ell}+1\; 2^{\ell +1}+1})$ is $v$. Note that 
$(1,1,\ldots ,1)\in Ker(^tW_{2^{\ell}+1\; 2^{\ell +1}})\cap Ker (^tW_{2^{\ell}+1\; 2^{\ell +1}+1})$.\\
Let $X_1,\ldots ,X_r$ be the enumeration of the  $(2^{\ell}+1)$-element subsets of $V$ which appears as rows of the matrices $W_{2^{\ell}+1\; 2^{\ell +1}}$ and  $W_{2^{\ell}+1\; 2^{\ell +1}+1}$. 
For $a\in V$, we set 
$v_a:=(\varepsilon_1,\ldots , \varepsilon_r)$ where $\varepsilon_i=1$ if  $a\in X_i$,  $0$ otherwise.   We have  $^tv_a\in Ker (^tW_{2^{\ell}+1\; 2^{\ell +1}+1})$. 
Note that for all $A\subseteq V$, $A\neq \emptyset$, $\sum_{a\in A} {^tv_a}\neq 0$, indeed if $\sum_{a\in A} {^tv_a}= 0$ then $A\neq V$ and $\vert A\cap X_i\vert$ is even for all $i\in \{1,\ldots ,r\}$. Let $i_0$ be such that $A\cap X_{i_0}\neq \emptyset$. Let $u\in A\cap X_{i_0}$ and $w\in V\setminus A$, set $Y:=(X_{i_0} \setminus \{u\}) \cup \{w\}$. Then $Y$ is some $X_i$, but $\vert A\cap Y\vert$ is odd, that contradicts $\vert A\cap X_i\vert$ even  for all $i$.
So  the family 
$\{^tv_a\ : \ a\in V\}$ is linearly independent and thus forms   a basis of  $Ker(^tW_{2^{\ell}+1\; 2^{\ell +1}+1})$.
Let $u\in Ker (^tW_{2^{\ell}+1\; 2^{\ell +1}})\cap Ker (^tW_{2^{\ell}+1\; 2^{\ell +1}+1})$ with $u\neq 0$. Then $u= \sum_{a\in A} {^tv_a}$ for some non-empty $A\subseteq V$. Since 
$u\in Ker (^tW_{2^{\ell}+1\; 2^{\ell +1}})$,  $\sum_{a\in A} {^tW_{2^{\ell}+1\; 2^{\ell +1}}} {^tv_a}=0$. It follows that  $\sum_{a\in {A\cap F}} \vert \{X_i\ : \ a\in X_i,\ X_i\subseteq F\}\vert =0$ for every  $2^{\ell +1}$-element subset $F$ of $V$ and thus
$\vert A\cap F\vert$ is even. From that we deduce  $A=V$.  Indeed, if $A\neq V$, pick $b\in V\setminus A$, let $F_1$ be a  $2^{\ell +1}$-element subset of $V$ such that $A\cap F_{1}\neq \emptyset$. Let $b_1\in A\cap F_{1}$, set $F:= (F_{1} \setminus \{b_1\}) \cup \{b\}$, then we have $\vert A\cap F\vert$ odd, a contradiction. Thus 
$^tu= \sum_{a\in V} v_a=(1,1,\ldots ,1)$ proving that  $\{^t(1,1,\ldots ,1)\}$ forms a basis of $Ker (^tW_{2^{\ell}+1\; 2^{\ell +1}})\cap Ker (^tW_{2^{\ell}+1\; 2^{\ell +1}+1})$.
Since $^tw_U\in Ker (^tW_{2^{\ell}+1\; 2^{\ell +1}})\cap Ker (^tW_{2^{\ell}+1\; 2^{\ell +1}+1})$, then  
 $w_U=(0,\cdots ,0)$ or  $w_U=(1,\cdots ,1)$, that is  $U$ is empty or complete, so  ${\mathcal H}'={\mathcal H}$ or ${\mathcal H}'=\overline {\mathcal H}$.
 \end{proof}

\begin{problem}
Extend the proof of Theorem \ref{thm h=3, k=4;5} to others values of $h$. For an example, if $h=6$, $s(h)=10$. We guess that $v(h)\leq 16$. Does the same number of edges up to complementation for the $k$-element subsets,  $k=8,9$ and $10$, suffices for the equality of hypergraphs  up to complementation?
\end{problem}

\subsection{Monomorphic decomposition of hypergraphs} We present in this section a notion of monomorphic decomposition of a uniform hypergraph. This notion was introduced in \cite{P-T-2013}. Due to the introduction of an equivalence relation previously considered in \cite{oudrar-pouzet-cras} and developed in  \cite{O}, our presentation is simpler. Let  ${\mathcal H}:=(V,{\mathcal E})$ be a $h$-uniform  hypergraph. Let $F\subseteq V$, we say that ${\mathcal H}$ is $F$-\emph{constant} or that $V\setminus F$ is a \emph{constant block} if $\mathcal H(A)= \mathcal H(A')$  for every  $A, A'\in [V]^h$  such that $A\cap F=A'\cap F$. 
We say that $\mathcal H$ is \emph{almost-constant} if it is $F$-constant for some finite subset $F$. 

\begin{example}
  A $2$-uniform hypergraph is simply a (undirected)  graph.  If $G:=(V, \mathcal E)$ is  a graph then a subset 
$B$ of
$V$ is a constant block if and only if it satisfies the two conditions:
\begin{enumerate}
\item B is either a clique or an independent of $G$;
\item B is an autonomous  subset of $G$, that is for every $y\in {\mathcal E} \setminus B$ and every $x, x'\in B$,
$\{y,x\}\in \mathcal E \Leftrightarrow \{y,x'\}\in \mathcal E$.
\end{enumerate} 
\end{example}

A \emph{monomorphic decomposition} of a $h$-uniform hypergraph $\mathcal H$ is a partition of $V$ into constant blocks. 
\begin{lemma}\label{lem:partition} A partition $\mathcal P$ of $V$ is a monomorphic decomposition if $\mathcal H(A)= \mathcal H(A')$  for every  $A, A'\in [V]^h$ such that $\vert A\cap B\vert =\vert A'\cap B\vert $ for every block $B\in \mathcal P$. 
\end{lemma}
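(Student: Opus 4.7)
The plan is to verify directly from the definitions that, under the stated hypothesis, every block of $\mathcal P$ is a constant block. I would fix an arbitrary block $B_0 \in \mathcal P$ and aim to show that $\mathcal H$ is $(V\setminus B_0)$-constant; concretely, given $A,A'\in[V]^h$ with $A\cap(V\setminus B_0)=A'\cap(V\setminus B_0)$, equivalently $A\setminus B_0 = A'\setminus B_0$, the goal is to derive $\mathcal H(A)=\mathcal H(A')$ from the assumption.

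The only observation needed is that the equality $A\setminus B_0 = A'\setminus B_0$ already forces equality of intersection sizes with \emph{every} block of $\mathcal P$. Indeed, for any block $C\in\mathcal P$ distinct from $B_0$ we have $C\subseteq V\setminus B_0$, whence
\[ A\cap C = (A\setminus B_0)\cap C = (A'\setminus B_0)\cap C = A'\cap C, \]
and in particular $|A\cap C|=|A'\cap C|$. For $B_0$ itself, the uniformity constraint $|A|=|A'|=h$ gives $|A\cap B_0| = h - |A\setminus B_0| = h - |A'\setminus B_0| = |A'\cap B_0|$. Thus the hypothesis of the lemma applies and yields $\mathcal H(A)=\mathcal H(A')$. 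Since $B_0$ was arbitrary, every block is constant and $\mathcal P$ is a monomorphic decomposition.

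There is no real obstacle here: the lemma amounts to noticing that the partition structure combined with the uniform size $h$ converts the condition ``$A$ and $A'$ agree outside $B_0$'' into the ostensibly stronger ``$A$ and $A'$ have the same intersection profile with all blocks''. If one wished to promote the lemma to a characterization by establishing the converse implication, the standard swap argument would suffice: iteratively exchange an element of $(A\setminus A')\cap B$ with one of $(A'\setminus A)\cap B$ inside a common block $B$, each swap preserving $\mathcal H$ by the constant-block property of $B$ and strictly reducing $|A\triangle A'|$.
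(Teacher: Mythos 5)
Your proof is correct. The paper itself supplies no argument for this lemma (it simply points to Lemma 2.9 of \cite{P-T-2013}), and your direct verification --- that agreement of $A$ and $A'$ outside a fixed block $B_0$, combined with $\vert A\vert =\vert A'\vert =h$, already forces the intersection cardinalities with \emph{all} blocks to coincide, so the hypothesis applies and $B_0$ is a constant block --- is exactly the intended one-line reduction; the converse swap argument you sketch is not required for the statement as written.
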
 
This is essentially Lemma 2.9 p.13 of \cite{P-T-2013}. 

Let $x,y\in V$. We set  $x\equiv_{\mathcal H} y$ if
\begin{equation}\label {eq:def2}
\mathcal H(K\cup \{x\})=\mathcal H( K\cup \{y\})
\end{equation} 
for every $K\in [V\setminus \{x,y\}]^{h-1}$.

 \begin{proposition} \label{maurice-thiery} The relation $\equiv_{\mathcal H}$ is an equivalence relation on $V$. The blocks of this equivalence are constant. They form a monomorphic decomposition of $\mathcal H$ and every monomorphic decomposition is finer.
 \end{proposition}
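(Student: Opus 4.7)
The plan is to proceed in four stages: (i) establish that $\equiv_{\mathcal H}$ is an equivalence relation; (ii) show that each of its classes is a constant block; (iii) deduce that the induced partition is a monomorphic decomposition; (iv) verify that any other monomorphic decomposition refines it.

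For (i), reflexivity and symmetry are immediate, so the only real content is transitivity. I would assume $x \equiv_{\mathcal H} y$ and $y \equiv_{\mathcal H} z$ and fix $K \in [V \setminus \{x, z\}]^{h-1}$. When $y \notin K$, the set $K$ lies in both $[V \setminus \{x, y\}]^{h-1}$ and $[V \setminus \{y, z\}]^{h-1}$, and chaining through $\mathcal H(K \cup \{y\})$ finishes that case. The subtle case is $y \in K$, where I would write $K = K' \cup \{y\}$ with $K' \in [V \setminus \{x, y, z\}]^{h-2}$, apply $y \equiv_{\mathcal H} z$ with the auxiliary $(h-1)$-set $K' \cup \{x\}$ to obtain $\mathcal H(K' \cup \{x, y\}) = \mathcal H(K' \cup \{x, z\})$, and apply $x \equiv_{\mathcal H} y$ with $K' \cup \{z\}$ to obtain $\mathcal H(K' \cup \{x, z\}) = \mathcal H(K' \cup \{y, z\})$; chaining yields $\mathcal H(K \cup \{x\}) = \mathcal H(K \cup \{z\})$. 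This auxiliary-element trick is the one genuinely delicate point in the whole proof.

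For (ii), given an $\equiv_{\mathcal H}$-class $B$ and $h$-subsets $A, A'$ with $A \setminus B = A' \setminus B$ (so in particular $|A \cap B| = |A' \cap B|$), I would induct on $|A \triangle A'|$: when it is positive, pick $x \in (A \cap B) \setminus A'$ and $y \in (A' \cap B) \setminus A$, set $A'' := (A \setminus \{x\}) \cup \{y\}$, and observe that $K := A \setminus \{x\}$ avoids both $x$ and $y$; since $x, y \in B$ we have $x \equiv_{\mathcal H} y$, so $\mathcal H(A) = \mathcal H(A'')$, while $|A'' \triangle A'| < |A \triangle A'|$. Thus $B$ is a constant block, and part (iii) follows at once from the definition of monomorphic decomposition.

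For (iv), let $\mathcal P$ be an arbitrary monomorphic decomposition and let $x, y$ lie in the same block $B \in \mathcal P$. Given $K \in [V \setminus \{x, y\}]^{h-1}$, the sets $A := K \cup \{x\}$ and $A' := K \cup \{y\}$ differ only by swapping $x$ with $y$ inside $B$, so $|A \cap B'| = |A' \cap B'|$ for every $B' \in \mathcal P$. Lemma \ref{lem:partition} then gives $\mathcal H(A) = \mathcal H(A')$, proving $x \equiv_{\mathcal H} y$. Hence every block of $\mathcal P$ sits inside a single $\equiv_{\mathcal H}$-class, which is exactly the finer-than assertion.
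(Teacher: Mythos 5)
Your proposal is correct and follows essentially the same route as the paper: the same auxiliary-element pivot through $\mathcal H(K'\cup\{x,z\})$ for transitivity in the case $y\in K$, the same one-swap-at-a-time induction to show each class is constant, and the same observation that elements of a constant block are pairwise $\equiv_{\mathcal H}$-equivalent for the refinement claim. One small remark on step (iv): you invoke Lemma \ref{lem:partition} in the converse direction to how it is stated (it gives a \emph{sufficient} condition for a partition to be a monomorphic decomposition), but this is harmless, since the fact you actually need --- that $\mathcal H(K\cup\{x\})=\mathcal H(K\cup\{y\})$ when $x,y$ lie in a common block $B$ of $\mathcal P$ --- follows immediately from the definition of a constant block applied with $F=V\setminus B$.
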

 Except for the introduction of the equivalence relation,  this is essentially Lemma 2.11 and Proposition 2.12 p. 14 of \cite{P-T-2013}. We give an outline of the proof below.  

\begin{proof} First, 
$\equiv_{\mathcal H}$ is an equivalence relation. For that it suffices
to check that it is transitive. Let $ x,y, z\in V$ with $x\equiv_{\mathcal H} y$ and $y\equiv_{\mathcal H}z$.
We check that $x\equiv_{\mathcal H} z$. We may suppose these elements pairwise distinct.
Let $K\in [V\setminus \{x,z\}]^{h-1}$. \\
Case 1) $y\not \in K$. In this case
$\mathcal H(K\cup\{x\})= \mathcal H(K\cup\{y\})$  and $\mathcal H(K\cup\{y\})=\mathcal  H(K\cup\{z\})$.
Hence $\mathcal H(K\cup\{x\}= \mathcal  H(K\cup\{z\})$. Thus $x\equiv_{\mathcal H} z$. \\
Case 2) $y\in K$.
Set $K':= (K\setminus \{y\})\cup \{z\}$. Since $x\equiv_{\mathcal H} y$, we have $\mathcal H(K'\cup\{x\})= \mathcal H(K'\cup\{y\})=\mathcal H(K\cup \{z\})$. Similarly, setting $K'':= (K\setminus \{y\})\cup \{x\}$ then, since
$y\equiv_{\mathcal H} z$,  we have $\mathcal H(K\cup\{x\})=\mathcal H(K''\cup\{y\})=\mathcal H(K''\cup \{z\})$. Since
$K'\cup\{x\}=K''\cup\{z\}$, we have $\mathcal H(K\cup\{x\})= \mathcal  H(K\cup\{z\})$ and thus $x\equiv_{\mathcal H} z$ as claimed. Next, the blocks of this equivalence are constant. Let $C$ be a block of $\equiv_{\mathcal H}$. We prove that $\mathcal H(A)= \mathcal H(A')$  for every  $A, A'\in [V]^h$  such that $A\setminus C=A'\setminus C$. Let $\ell := \vert A\setminus A'\vert$. If  $\ell=0$, $A=A'$, there is nothing to prove. If $\ell =1$,  then $A= \{x\}\cup (A\cap A')$ and $A'=\{y\}\cup (A\cap A')$, with $x,y\in C$; in this case $\mathcal H(A)= \mathcal H(A')$ since $x\equiv_{\mathcal H} y$. If $\ell >1$, set $K:=A \setminus C$ and $k:= \vert K\vert$. We may find a sequence of $(h-k)$-element subsets of $C$, say $A_0, \dots A_i, \dots, A_r$ such that $A_0= A\cap C$,  $A_r= A'\cap C$ and the symmetric difference of $A_i$ and $A_{i+1}$ is $0$ or $1$. From the case $\ell=1$ we have $\mathcal H(A_i\cup K)= \mathcal H(A_{i+1}\cup K)$ for $i<r$. Hence $\mathcal H(A)= \mathcal H(A')$. Since these blocks are constant, they form a monomorphic decomposition. To conclude that every other monomorphic decomposition is finer, note that the elements of a constant block are pairwise equivalent w.r.t. $\equiv_{\mathcal H}$ hence contained into a block of this equivalence.  \end{proof}

We call {\it components} the blocks of the  equivalence relation $\equiv_{\mathcal H}$.  Note that the equivalence relations  $\equiv_{\mathcal H}$ and  $\equiv_{\overline{\mathcal H}}$ coincide and also that if $\mathcal H'$ in an other $h$-uniform hypergraph,  every  isomorphism of $\mathcal H$ onto $\mathcal H'$ will transform $\equiv_{\mathcal H}$ into $\equiv_{\mathcal H'}$, thus carrying  the components of  $\mathcal H$ onto the components of $\mathcal H'$.

Let us recall Fra\"{\i}ss\'e's Theorem  on almost constant hypergraphs \cite{Fr2}, this result  is a consequence of the infinite form of Ramsey's Theorem.

\begin{theorem} \label{FRA}
Let $h$ be a non-negative integer, ${\mathcal H}$ be a $h$-uniform hypergraph on an infinite set $V$ and $F$ be a finite subset of $V$. Then there is an infinite subset $V'$ of $V$ such that ${\mathcal H}_{\restriction V'}$ is $F$-constant. 
\end{theorem}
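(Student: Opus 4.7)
The plan is to construct $V'$ in the form $F\cup W$ for some infinite $W\subseteq V\setminus F$, and to build $W$ by a finite cascade of applications of the infinite Ramsey theorem, one for each subset $S\subseteq F$ of cardinality at most $h$. The key observation is that every $A\in [F\cup W]^{h}$ decomposes uniquely as $A=S\cup B$ with $S:=A\cap F$ and $B:=A\setminus F\in [W]^{h-\vert S\vert}$; hence $F$-constancy of ${\mathcal H}_{\restriction V'}$ amounts to the assertion that, for each $S\subseteq F$ with $\vert S\vert\leq h$, the value ${\mathcal H}(S\cup B)$ is independent of the choice of $B\in [W]^{h-\vert S\vert}$.

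Enumerate the finitely many subsets $S_1,\dots ,S_N$ of $F$ with $\vert S_i\vert\leq h$. Starting from $W_0:=V\setminus F$ and proceeding recursively, at step $i$ one considers the $2$-coloring $c_i\colon [W_{i-1}]^{h-\vert S_i\vert}\to\{0,1\}$ defined by $c_i(B):={\mathcal H}(S_i\cup B)$ and, by the infinite Ramsey theorem applied to $c_i$, extracts an infinite subset $W_i\subseteq W_{i-1}$ on which $c_i$ is constant. (When $\vert S_i\vert=h$ the step is vacuous since $[W_{i-1}]^{0}=\{\emptyset\}$, so one simply sets $W_i:=W_{i-1}$.) The decreasing chain $W_0\supseteq W_1\supseteq\cdots\supseteq W_N$ of infinite sets terminates after finitely many steps because $F$ is finite, and each $W_i$ is infinite by the conclusion of Ramsey's theorem.

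Set $W:=W_N$ and $V':=F\cup W$. Given $A,A'\in [V']^{h}$ with $A\cap F=A'\cap F=S_i$, write $A=S_i\cup B$ and $A'=S_i\cup B'$ with $B,B'\in [W]^{h-\vert S_i\vert}\subseteq [W_i]^{h-\vert S_i\vert}$; since $c_i$ is constant on $W_i$, we conclude ${\mathcal H}(A)=c_i(B)=c_i(B')={\mathcal H}(A')$, which is exactly $F$-constancy of ${\mathcal H}_{\restriction V'}$. The main obstacle is little more than bookkeeping: one must handle the coloring arities uniformly (including the degenerate case $\vert S_i\vert=h$) and make sure that the finite cascade of Ramsey reductions preserves infinitude at each stage. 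Both points are built into the recursion above, so the proof proceeds without further difficulty.
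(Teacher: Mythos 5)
Your proof is correct: the decomposition $A=S\cup B$ with $S=A\cap F$, together with one application of the infinite Ramsey theorem for each of the finitely many traces $S\subseteq F$ (the case $\vert S\vert=h$ being vacuous), is exactly the standard argument. The paper does not spell out a proof but states that the result is a consequence of the infinite form of Ramsey's theorem, which is precisely the route you take, so your argument matches the intended one.
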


Let $\psi (h):=h+2^t$ where $t$ is the largest integer $t'$ such that $2^{t'}\leq h$, that is $\psi (h)=h+2^{\lfloor \log_2 h\rfloor}$. 
\begin{theorem}\label{seuil}  Let ${\mathcal H}$ and ${\mathcal H}'$ be two $h$-uniform hypergraphs on the same set $V$ of vertices. Suppose that 
\begin{enumerate}[1)] 
\item ${\mathcal H}$ and ${\mathcal H}'$ are $F$-constant for some $F\subseteq V$, 
\item $\vert V\setminus F\vert \geq h$,
\item ${\mathcal H}$ and ${\mathcal H}'$ are $\psi (h)$-hypomorphic up to complementation.
\end{enumerate}
Then   ${\mathcal H}={\mathcal H}'$  or  ${\mathcal H}=\overline{{\mathcal H}'}$. 
\end{theorem}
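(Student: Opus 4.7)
I set $U := \mathcal{H}\dot{+}\mathcal{H}'$. Since $\mathcal{H}$ and $\mathcal{H}'$ are both $F$-constant, so is $U$, described by the type function $g := f_{\mathcal{H}}\oplus f_{\mathcal{H}'}$ on $\{S\subseteq F:|S|\leq h\}$. The desired conclusion $\mathcal{H}=\mathcal{H}'$ or $\mathcal{H}=\overline{\mathcal{H}'}$ is equivalent to $g\equiv 0$ or $g\equiv 1$, i.e.\ to $U$ being either the empty or the complete $h$-uniform hypergraph on $V$.

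The first move is a parity reduction via Lucas's theorem. Writing $h=2^\ell+h'$ with $0\leq h'<2^\ell$, so $\psi(h)=2^{\ell+1}+h'$, the binary digit at position $\ell$ is $1$ in $h$ but $0$ in $\psi(h)$, so Theorem~\ref{lucas} yields $\binom{\psi(h)}{h}\equiv 0\pmod 2$. The hypothesis of $\psi(h)$-hypomorphy up to complementation forces, on each $\psi(h)$-subset $K$, either $e(\mathcal{H}_{\restriction K})=e(\mathcal{H}'_{\restriction K})$ or $e(\mathcal{H}_{\restriction K})+e(\mathcal{H}'_{\restriction K})=\binom{\psi(h)}{h}$; in both cases $e(U_{\restriction K})$ is even. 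With $T:=K\cap F$ and $t:=|T|$, $F$-constancy turns this into the linear relation
\[
\sum_{S\subseteq T} g(S)\binom{\psi(h)-t}{h-|S|}\equiv 0\pmod 2,
\]
valid for every $T\subseteq F$ with $\psi(h)-t\leq |V\setminus F|$; the hypothesis $|V\setminus F|\geq h$ makes all $t\geq 2^\ell$ feasible. Reading each binomial modulo $2$ via Lucas produces a triangular-looking linear system in $g$.

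The crux is to combine these relations with the full isomorphism data to force $g$ constant. For $h$ a power of $2$ (the case $h'=0$), parity already suffices and one recovers Corollary~\ref{$k=2h$ et $v=3h$}. For $h'\geq 1$, Corollary~\ref{rank} together with Wilson's theorem show that the left kernel of $W_{h,\psi(h)}$ modulo $2$ has dimension $\binom{v}{h'}>1$, so parity alone admits non-trivial $F$-constant $U$. To close the gap I would exploit isomorphism beyond its consequence on edge counts. Both $\mathcal{H}_{\restriction K}$ and $\mathcal{H}'_{\restriction K}$ are $T$-constant, so by Proposition~\ref{maurice-thiery} the set $K\setminus T$ lies in a single $\equiv$-component that any isomorphism (or iso-with-complement) must preserve; when this component is exactly $K\setminus T$, the isomorphism restricts to a permutation $\tau$ of $T$ with $f_{\mathcal{H}}(S)=f_{\mathcal{H}'}(\tau(S))$ in the iso case and $f_{\mathcal{H}}(S)=1-f_{\mathcal{H}'}(\tau(S))$ in the complement case. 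The main obstacle I anticipate is stitching these local constraints together as $T$ ranges over $F$: one must force the iso-vs-complement choice to be globally consistent (via intersections $T_1\cap T_2$) and must also eliminate the local permutations $\tau$ (using parity constraints and size considerations on $T$); once this globalization is achieved, $g$ is pinned to $\equiv 0$ or $\equiv 1$ and the theorem follows.
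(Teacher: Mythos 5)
Your proposal stops exactly at the crux. Everything up to ``The main obstacle I anticipate'' is sound preparatory material (the parity of $\binom{\psi(h)}{h}$ via Lucas, the observation that parity alone suffices only when $h$ is a power of $2$, the relevance of the component structure from Proposition~\ref{maurice-thiery}), but the ``globalization'' you defer is the entire content of the theorem, and the scheme you sketch for it --- a linear system indexed by all traces $T=K\cap F$, together with local permutations $\tau$ of $T$ whose elimination and whose iso-vs-complement choices must be made coherent across overlapping $T$'s --- is not how the difficulty resolves, and I do not see how to close it along those lines.

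The two ideas you are missing are the following. First, the global iso-vs-complement consistency is settled \emph{once}, at the bottom: normalize by replacing $\mathcal{H}'$ with $\overline{\mathcal{H}'}$ if necessary so that $\mathcal{H}$ and $\mathcal{H}'$ agree on $V\setminus F$ (say with no hyperedge there), and then prove $\mathcal{H}(A)=\mathcal{H}'(A)$ by induction on $\ell=|A\cap F|$. Second, for the inductive step one does not use arbitrary $\psi(h)$-sets: given $A$ with $A\cap F=F_0$, $|F_0|=\ell$, one takes the witness $K=A\cup S$ with $S\subseteq V\setminus(F\cup A)$, so that $K\cap F=F_0$ exactly. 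Then $F_0$-constancy makes the level-$\ell$ hyperedges of $\mathcal{H}_{\restriction K}$ a single all-or-nothing family $\{F_0\cup I: I\in[S\cup(A\setminus F)]^{h-\ell}\}$, while the induction hypothesis identifies the levels $<\ell$ in $\mathcal{H}_{\restriction K}$ and $\mathcal{H}'_{\restriction K}$; hence if $\mathcal{H}_{\restriction K}\simeq\mathcal{H}'_{\restriction K}$, mere equality of edge counts already forces equality of the restrictions --- no analysis of permutations $\tau$ of $F_0$ is needed. What remains is to exclude $\mathcal{H}_{\restriction K}\simeq\overline{\mathcal{H}'_{\restriction K}}$, and this splits into the two regimes you partially anticipated: for $2\ell<k:=\psi(h)$ the set $K\setminus F_0$ lies in the unique component of maximal size, which is edgeless in $\mathcal{H}_{\restriction K}$ but complete in $\overline{\mathcal{H}'_{\restriction K}}$, a contradiction; for $2\ell\geq k$ one gets the count $2\,|\mathcal{E}_{<\ell}(\mathcal{H}_{\restriction K})|+\binom{k-\ell}{h-\ell}=\binom{k}{h}$, which contradicts Lucas because $\binom{k}{h}$ is even while $\binom{k-\ell}{h-\ell}$ is odd for $k/2\leq\ell\leq h$ --- and it is precisely the choice $\psi(h)=h+2^{\lfloor\log_2 h\rfloor}$ that guarantees this last parity fact. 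Without the normalization, the induction on $|A\cap F|$, and the tailored choice of $K$, your local constraints do not assemble into the conclusion.
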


\begin{proof} 
We may suppose that ${\mathcal H}$ and ${\mathcal H}'$ coincide on $V\setminus F$ (otherwise replace ${\mathcal H}'$ by $\overline{{\mathcal H}'}$) and for example that there is no hyperedge in $V\setminus F$.
We prove that ${\mathcal H}={\mathcal H}'$. For that,  we prove by induction on $\ell$ that:
\begin{equation}\label{equ:induction}
{\mathcal H}(A)={\mathcal H'}(A)
\end{equation}
for every $A\in [V]^h$ such that
$\vert A\cap F\vert \leq \ell$. 

If $\ell:=0$ then since ${\mathcal H}$ and ${\mathcal H}'$ coincide on $V\setminus F$, Equation (\ref{equ:induction}) holds. \\
Suppose that $\ell \geq 1$. Let $A\in [V]^h$ such that  $\vert F\cap A\vert \leq \ell$. 
Let $F_0:=A\cap F$. If $\vert F_0\vert < \ell$ then (\ref{equ:induction}) holds by induction. 
Suppose that $\vert F_0\vert = \ell$. 
Let $S$ and $K:=A\cup S$ where $S\subseteq V\setminus (F\cup A)$, $\vert S\vert =s$, 
and $k:=h+s = \psi (h)$.
Let ${\mathcal H}_0:={\mathcal H}_{\restriction {K}}$, ${\mathcal H}'_0:={\mathcal H}'_{\restriction {K}}$. 
We have $K\in [V]^k$ and $F_0= K\cap F$.\\
Since ${\mathcal H}$ and ${\mathcal H}'$ are $F$-constant, then the $h$-uniform hypergraphs ${\mathcal H}_0$ and  ${\mathcal H}'_0$ are     
$F_0$-constant.\\
The set  ${\mathcal E}({\mathcal H}_0)$ of 
hyperedges of  ${\mathcal H}_0$ is the disjoint union of ${\mathcal E}_{< \ell }({\mathcal H}_0):=\{ A'\in {\mathcal E}({\mathcal H}_0)\ : \ \vert A'\cap F\vert < \ell \}$ and 
${\mathcal E}_{= \ell}({\mathcal H}_0):=\{ A'\in {\mathcal E}({\mathcal H}_0)\ : \ \vert A'\cap F\vert = \ell\}$:
\begin{equation}\label{equ:induction+}
  {\mathcal E}({\mathcal H}_0)= {\mathcal E}_{< \ell}({\mathcal H}_0) \cup {\mathcal E}_{= \ell}({\mathcal H}_0). 
\end{equation}
Similarly, ${\mathcal H}'_0$ decomposes into ${\mathcal E}_{< \ell}({\mathcal H}'_0)$ and 
${\mathcal E}_{= \ell}({\mathcal H}'_0)$:
\begin{equation}\label{equ:induction++}
  {\mathcal E}({\mathcal H}'_0)= {\mathcal E}_{< \ell}({\mathcal H}'_0) \cup {\mathcal E}_{= \ell}({\mathcal H}'_0). 
\end{equation}
By induction hypothesis,
\begin{equation}\label{equ:induction+++}
 {\mathcal E}_{< \ell}({\mathcal H}_0) =   {\mathcal E}_{< \ell}({\mathcal H}'_0).
 \end{equation}
\begin{claim} If ${\mathcal H}_0\simeq {\mathcal H}'_0$ then   ${\mathcal H}_0={\mathcal H}'_0$ and hence (\ref{equ:induction}) holds.
\end{claim}
Indeed, we have   $\vert  {\mathcal E}({\mathcal H}_0)  \vert = \vert  {\mathcal E} ({\mathcal H}'_0)  \vert$.  
 From (\ref{equ:induction+}), (\ref{equ:induction++}) and (\ref{equ:induction+++}), it follows that $\vert  {\mathcal E}_{= \ell}({\mathcal H}_0)  \vert = \vert  {\mathcal E}_{= \ell}({\mathcal H}'_0)  \vert$. 
 Since ${\mathcal H}_0$ is $F_0$-constant, then  
 ${\mathcal E}_{= \ell}({\mathcal H}_0)=\emptyset$ or ${\mathcal E}_{= \ell}({\mathcal H}_0)= \{ F_0\cup I\ : \ I\in [S]^{h-l}\}$. 
 The same holds for ${\mathcal H}'$, hence ${\mathcal E}_{= \ell}({\mathcal H}_0) = {\mathcal E}_{= \ell}({\mathcal H}'_0)$.\\
 Thus ${\mathcal E}({\mathcal H}_0) = {\mathcal E}({\mathcal H}'_0)$ proving 
 ${\mathcal H}_0={\mathcal H}'_0$.\\

To conclude, we prove that ${\mathcal H}_{0}\nsimeq {\overline{{\mathcal H}'_{0}}}$.

{\bf Case 1.}  $k\geq 2l+1$.
Let $C:= K\setminus F_0$. We have $\vert C\vert > \vert F_0\vert$. Since  ${\mathcal H}_0$ is $F_0$-constant, $C$ is a constant block. According to Proposition \ref{maurice-thiery}, $C$ is included into a component $D$ of ${\mathcal H}_0$. Similarly $C$ is included into a component $D'$ of ${\mathcal H}'_0$.
Since $\vert C\vert > \vert F_0\vert$, all components of ${\mathcal H}_0$ distinct of $D$ have a cardinality strictly smaller than $\vert D\vert$. The same with $D'$ the component of $\overline{{\mathcal H}'_0}$ containing $C$.
An isomorphism from   ${\mathcal H}_0$ onto $\overline{{\mathcal H}'_0}$ will map $D$ onto $D'$. But there is no hyperedge of ${\mathcal H}$ in $D$ whereas every $h$-element subset of $D$ is a hyperedge of $\overline{{\mathcal H}'_0}$. Contradiction.

{\bf Case 2.} $k-\ell \leq \ell$.  
By the induction hypothesis  for $\ell'<\ell$ and the fact that  ${\mathcal H}_{0}\simeq {\overline{{\mathcal H}'_{0}}}$, we have:
\begin{equation}\label{eq:parity}
2.\vert  {\mathcal E}_{< \ell}({\mathcal H}_0)  \vert + {k-l\choose h-l}={k\choose h}. 
\end{equation}
Recall that $k=h+s = \psi (h)$ with $s=2^t$ where $t$ is the largest integer $t'$ such that $2^{t'}\leq h$. Then $s\leq h< 2^{t+1}$.
We have 
$h=\sum_{i=0}^{t}a_i2^i$ with $a_i\in \{0,1\}$, $a_t=1$, $k= \sum_{i=0}^{t-1}a_i2^i + 2^{t+1}$.  From Corollary \ref{cor-lucas}, ${k \choose h}$ is even.
Since $\frac{k}{2}\leq \ell  \leq h$ then ${k-\ell \choose h-\ell}$  is odd. Indeed, 
 $\ell \geq s$, thus  $h-l<2^t$, so 
$h-l=\sum_{i=0}^{t-1}c_i2^i$ with $c_i\in \{0,1\}$, and thus $k-l=\sum_{i=0}^{t-1}c_i2^i + 2^t$. Then, by Corollary  \ref{cor-lucas}, ${k-\ell   \choose  h-\ell}$ is odd. 
 The facts that ${k \choose h}$ is even and ${k-\ell \choose h-\ell}$ is odd contradict Equation (\ref{eq:parity}).
\end{proof}

\section{Proof of Theorem \ref{thm1}}
 Suppose that for some $h$ the statement is false. 
\begin{claim} \label{claim:compactness}There are two $h$-uniform hypergraphs $\mathcal H^{(i)}$, $i=1,2$, on $\N^*$  which are not equal to complementation but are  $k$-{hypomorphic up to complementation} for every $k\in \N$. 
\end{claim}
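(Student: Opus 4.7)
The plan is to apply the compactness theorem of first-order logic, using Proposition \ref{thm:down-k-iso} to produce a convenient counterexample at each finite stage. Work in the language $L$ with equality and two $h$-ary relation symbols $E_{1}$ and $E_{2}$, and consider the $L$-theory $T$ axiomatising the following: (a) each $E_{i}$ is invariant under permutation of its arguments and vanishes on any tuple with a repeated coordinate, so that $E_{i}$ encodes an $h$-uniform hypergraph on the domain; (b) for every $n\ge 1$, the existence of $n$ pairwise distinct elements; (c) for every $k\ge 1$, a sentence $\phi_{k}$ expressing in first-order that the hypergraphs encoded by $E_{1}$ and $E_{2}$ are $k$-hypomorphic up to complementation, namely the universal closure over distinct $x_{1},\dots ,x_{k}$ of a finite disjunction indexed by the $2\cdot k!$ pairs (permutation of $\{1,\dots ,k\}$, complementation flag $\varepsilon \in \{0,1\}$); and (d) two existential sentences saying ``not equal up to complementation'', namely the existence of an $h$-tuple of distinct vertices on which $E_{1}$ and $E_{2}$ disagree, and the existence of an $h$-tuple of distinct vertices on which they agree. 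Each $\phi_{k}$ is genuinely first-order because the disjunction in (c) is finite; the pair in (d) says precisely $\mathcal H^{(1)}\ne \mathcal H^{(2)}$ and $\mathcal H^{(1)}\ne \overline{\mathcal H^{(2)}}$.

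By compactness it suffices to exhibit a model of every finite fragment $T_{0}\subseteq T$. Such a $T_{0}$ involves $\phi_{k}$ only for $k\le K$ and infinity axioms only for $n\le N$, for some integers $K$ and $N$. Set $t:=\max(2K,N)$. The standing hypothesis that Theorem \ref{thm1} fails for this value of $h$ (applied to the pair $(K,t)$, which satisfies $K\le t$) supplies two $h$-uniform hypergraphs $\mathcal H$ and $\mathcal H'$ on a vertex set $V$ with $|V|\ge t$ that are $K$-hypomorphic up to complementation yet not equal up to complementation. Since $|V|\ge 2K$, Proposition \ref{thm:down-k-iso} upgrades this to $k$-hypomorphy up to complementation for every $k\le K$. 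Reading $E_{1},E_{2}$ off $\mathcal H,\mathcal H'$ therefore produces an $L$-structure on $V$ satisfying every axiom of $T_{0}$.

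Compactness thus yields a model of $T$; the infinity axioms force its domain to be infinite, and L\"owenheim--Skolem provides a countably infinite one, whose universe we identify with $\N^{*}$. The two hypergraphs $\mathcal H^{(1)}$ and $\mathcal H^{(2)}$ read off $E_{1}$ and $E_{2}$ in this model are $k$-hypomorphic up to complementation for every $k\in\N$ by (c) and not equal up to complementation by (d), which proves the claim. The only non-routine ingredient is Proposition \ref{thm:down-k-iso}, invoked to convert a single $K$-hypomorphic counterexample into one fulfilling all lower-$k$ hypomorphism conditions simultaneously; everything else is standard compactness plus L\"owenheim--Skolem.
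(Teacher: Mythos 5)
Your proof is correct and is essentially the paper's argument in different clothing: the paper realizes the same compactness principle topologically, as a nested intersection of non-empty closed sets $\mathcal B_k$ in the product space of pairs of hypergraphs on $\N^*$, while you phrase it via first-order compactness plus L\"owenheim--Skolem. In both versions the real content is identical --- the negation of Theorem \ref{thm1} supplies a finite counterexample at each stage, and Proposition \ref{thm:down-k-iso} (with $|V|\ge 2K$) downgrades $K$-hypomorphy to all smaller $k$ so that the finite approximations are simultaneously satisfiable --- so no further comment is needed.
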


\noindent{Proof of Claim \ref{claim:compactness}.}
We use a compactness argument. 
Let $\mathcal H(h, \N^{*})$ be the set of $h$-uniform hypergraphs with domain $\N^{*}:= \N\setminus \{0\}$. Essentially, this is the power set $\powerset ([\N^{*}]^h)$. Once equipped with the product topology, the power set is a compact space, hence we may view $\mathcal H(h, \N^{*})$ as a compact space and also  the product space $\mathcal H^2(h, \N^{*}):= \mathcal H(h, \N^{*})\times \mathcal H(h, \N^{*})$ of pairs $(\mathcal H^{(1)}, \mathcal H^{(2)})$ of $h$-uniform hypergraph with domain $\N^{*}$. 
Let $\mathcal B$ be the pairs $(\mathcal H^{(1)}, \mathcal H^{(2)})\in  \mathcal H^2(h, \N^{*})$ which are  $k$-{hypomorphic up to complementation} for every $k\in \N$ but whose restrictions  to $\{1, \dots, 2h\}$ are not equal up to complementation. To  prove our claim, it suffices to prove that $\mathcal B$ is non-empty. For that, we prove that this set is the intersection of a decreasing sequence of non-empty closed subsets of $\mathcal H^2(h, \N^{*})$. The compactness of this set ensures the non-emptyness of $\mathcal B$. For every integer $k$, let $\mathcal B_k$ be the set of pairs $(\mathcal H^{(1)}, \mathcal H^{(2)})\in  \mathcal H^2(h, \N^{*})$ such that the restrictions to  $\{1, \dots, 2k-1\}$   are $k$-{hypomorphic up to complementation} and the restrictions  to $\{1, \dots, 2h\}$ are not equal up to complementation.   Trivially,  we have $\mathcal B=\bigcap_{k\in \N}  \mathcal B_{k}$. Due to Theorem \ref{thm:down-k-iso}, we have $\mathcal B_{k+1}\subseteq \mathcal B_{k}$. Since the set of restrictions to $\{1, \dots, 2k-1\}$ of members of $\mathcal H(h, \N^{*})$ is finite, the  $\mathcal B_{k}$'s  are closed. To conclude, it suffices to observe that there are non-empty. 
Let $k\in \N$. Since the statement of the theorem is false, then for  
every  $t$, $k\leq t$, there is some $t'\geq t$ and  two hypergraphs ${\mathcal H}_k^{(i)}$ ($i=1,2$) on $V_{t'}:=\{1,\ldots , t'\}$ which are not equal up to complementation, but are $k$-{hypomorphic up to complementation}.  We can  assume that ${\mathcal H}_k^{(i)}$ ($i=1,2$) are not equal up to complementation on $\{1,\ldots ,l\}$ where $l\leq 2h$ 
(otherwise,  consider the hypergraphs obtained via a permutation transforming a subset $L$ of $V_{t'}$  on which they are not equal up to complementation into $\{1,\ldots ,l\}$). Let $t:= 2k-1$ and $t'$ corresponding to $t$. Let $\mathcal H^{(i)}$ be a $h$-uniform hypergraph on $\N^*$ which extends arbitrarily ${\mathcal H}^{(i)}_{k}$. We have $(\mathcal H^{(1)}, \mathcal H^{(2)})\in \mathcal B_k$. 
\hfill $\Box$

A contradiction is obtained through Theorem \ref{FRA}.  Indeed, there is a subset $F$ with at most $2h$ elements on which $\mathcal H^{1}$ and $\mathcal H^{2}$ are not equal up to complementation. According to  Theorem \ref{FRA}, applied twice, there is an infinite subset $V'$ of $V$ containing $F$ such that  $\mathcal H^{1}_{\restriction V'}$ and $\mathcal H^{2}_{\restriction V'}$ are $F$-constant. These two hypergraphs are $k$-{hypomorphic up to complementation} for all $k$. This contradicts Theorem \ref{seuil}. 

\section{Proof of Theorem \ref{thm-main2}}
Let $\varphi (h):=h+2^{\lfloor \log_2 h\rfloor}-1$. For example $\varphi (h)=2h-1$ if $h$ is a power of $2$. Theorem \ref{seuil} asserts that 
$s(h)\leq \psi(h)=\varphi(h)+1$.  
Theorem \ref{thm2} below asserts that $s(h)\geq \varphi(h)+1$.  Thus $s(h)= h+2^{\lfloor \log_2 h\rfloor}$. This proves $(1)$ of Theorem \ref{thm-main2}. 
Theorem 3.1 and Proposition 4.1 of \cite{dlps1} assert that $s(2)=4$.
We obtain $(2)$ of Theorem \ref{thm-main2} with Theorem \ref{thm h=3, k=4;5}.

 \begin{theorem}\label{thm2} Let $h$ be an integer. There are two $h$-uniform infinite hypergraphs ${\mathcal H}$ and ${\mathcal H}'$ on the same domain such that  ${\mathcal H}\neq {\mathcal H}'$  and  ${\mathcal H}\neq \overline{{\mathcal H}'}$, but ${\mathcal H}$ and ${\mathcal H}'$ are $k$-hypomorphic up to complementation for all $k\leq \varphi (h)$.
 These two hypergraphs are $F$-constant for some subset $F$ of cardinality at most $h$.\end{theorem}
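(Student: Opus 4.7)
The plan is to produce two $h$-uniform hypergraphs on an infinite $V$ that are both $F$-constant for the same $F$, differ only on hyperedges containing $F$, and whose restrictions to every $\varphi(h)$-element subset of $V$ are interchanged up to complementation by one explicit permutation. Set $t = \lfloor \log_2 h\rfloor$, pick $F \subseteq V$ with $|F| = 2^t$, and fix a single cycle $\tau$ of length $2^t$ on $F$. Define $\chi : \powerset(F) \to \{0,1\}$ by $\chi(\emptyset) = \chi(F) = 0$ and the recurrence $\chi(\tau(G)) = 1 - \chi(G)$ for every proper nonempty $G \subseteq F$. The recurrence is consistent because every $\tau$-orbit in $\powerset(F) \setminus \{\emptyset, F\}$ has length $2^{t-s}$ for some $0 \leq s < t$ (the stabilizers are subgroups of $\langle\tau\rangle \cong \Z/2^t\Z$), hence is a positive power of $2$ and in particular even. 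Let $\chi' = \chi$ except $\chi'(F) = 1$, and let $\mathcal{H}, \mathcal{H}'$ be the $F$-constant $h$-uniform hypergraphs on $V$ with patterns $\chi, \chi'$.

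That $\mathcal{H}$ and $\mathcal{H}'$ are not equal up to complementation is immediate: they differ on at least one $h$-edge containing $F$, so $\mathcal{H} \neq \mathcal{H}'$; and $\mathcal{H} = \overline{\mathcal{H}'}$ would force $\chi(G) = 1 - \chi(G)$ for every proper $G \subsetneq F$ appearing as some $A \cap F$, which since $V \setminus F$ is infinite is absurd.

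The heart of the argument is the isomorphism for $K \in [V]^{\varphi(h)}$. If $F \not\subseteq K$, then $A \cap F \subsetneq F$ for every $A \in [K]^h$, so $\chi(A \cap F) = \chi'(A \cap F)$ and $\mathcal{H}_{\restriction K} = \mathcal{H}'_{\restriction K}$. If $F \subseteq K$, then $|K \setminus F| = \varphi(h) - 2^t = h - 1$, so every $A \in [K]^h$ has $|A \cap F| \geq 1$. Let $\sigma : K \to K$ be $\tau$ on $F$ and the identity on $K \setminus F$. Then $\sigma^{-1}(A) \cap F = \tau^{-1}(A \cap F)$, whence $(\sigma \mathcal{H}_{\restriction K})(A) = \chi(\tau^{-1}(A \cap F))$. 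When $A \cap F = F$ this equals $\chi(F) = 0 = 1 - \chi'(F) = \overline{\mathcal{H}'_{\restriction K}}(A)$, and when $\emptyset \neq A \cap F \subsetneq F$ the antisymmetry of $\chi$ gives $\chi(\tau^{-1}(A \cap F)) = 1 - \chi(A \cap F) = 1 - \chi'(A \cap F) = \overline{\mathcal{H}'_{\restriction K}}(A)$. Hence $\sigma$ is an isomorphism from $\mathcal{H}_{\restriction K}$ to $\overline{\mathcal{H}'_{\restriction K}}$. The $k$-hypomorphy up to complementation for every $k \leq \varphi(h)$ then follows from Proposition \ref{thm:down-k-iso}.

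The delicate point, and the place where the value $2^t$ is forced, is the existence of such a pair $(\tau,\chi)$. One needs a permutation of $F$ whose only invariant subsets are $\emptyset$ and $F$ and whose remaining orbits on $\powerset(F)$ are all of even length; otherwise the alternation $\chi(\tau(G)) = 1 - \chi(G)$ is inconsistent. A single $n$-cycle achieves this exactly when $n$ is a power of $2$: if $n$ has an odd prime factor $p$, then the set of representatives of the $\tau^{n/p}$-cycles in $F$ has $\tau$-orbit of length $p$. Combined with $|F| \leq h$, this selects $|F| = 2^{\lfloor\log_2 h\rfloor}$, and the threshold $k = \varphi(h)$ is exactly where $|K \setminus F|$ drops below $h$, which keeps $A \cap F$ nonempty and avoids the otherwise fatal requirement $\chi(\emptyset) = 1 - \chi(\emptyset)$. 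This is the combinatorial face of Lucas's theorem in this lower-bound direction.
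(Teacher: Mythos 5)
Your proof is correct and follows essentially the same route as the paper's: the same $F$ of size $2^{\lfloor\log_2 h\rfloor}$, the same pair of $F$-constant hypergraphs differing exactly on the hyperedges containing $F$, and the same isomorphism on $\varphi(h)$-subsets given by a single cycle on $F$ extended by the identity (your alternating pattern $\chi$ along the $\tau$-orbits is precisely the paper's partition $\{\mathcal A,\mathcal A'\}$ of $\powerset^{\star}(F)$). The only cosmetic difference is that you justify the power-of-$2$ constraint by a direct orbit-parity argument rather than via Lucas's theorem, and that direction is in any case not needed for the statement.
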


\begin{proof}
Let $V$ be an infinite set, and $F\subseteq V$ having $r$ elements with $2\leq r\leq h$.\\
Let  ${\powerset}^{\star}(F):= {\powerset}(F)\setminus \{\emptyset , F\}$ be the set 
of proper subsets of $F$. Suppose that  $\{{\mathcal A},{\mathcal A'}\}$ is  a partition of ${\powerset}^{\star}(F)$ into two blocks and that $\varphi$ is a permutation of $F$ such that for all $X\in {\powerset}^{\star}(F)$:
\begin{equation}\label{duality}
X\in {\mathcal A} \Longleftrightarrow \varphi(X)\in {\mathcal A'}.
\end{equation}
We define two hypergraphs  ${\mathcal H}$ and ${\mathcal H}'$ as follows:\\
${\mathcal H}:=(V,{\mathcal E})$ where ${\mathcal E}:=\{A\in [V]^h\ :\ A\cap F \in {\mathcal A}\}$\\
and ${\mathcal {H'}} :=(V,{\mathcal E}')$ where ${\mathcal E}':= {\mathcal E} \cup \{A\in [V]^h\ :\ F\subseteq A\}$.

\begin{claim} \label{claim:not equal} ${\mathcal H}\neq {\mathcal H}'$ and ${\mathcal H}\neq {\overline{{\mathcal H}'}}$.
\end{claim}

\begin{proof} Every $h$-element subset of $V$ containing  $F$ belongs to ${\mathcal H}'$, whereas none of these sets is in ${\mathcal H}$. This proves that ${\mathcal H}\neq {\mathcal H}'$. 
Every $h$-element subset of $V\setminus F$ belongs to ${\overline{{\mathcal H}'}}$, none of these sets is in  ${\mathcal H}$ proving 
${\mathcal H}\neq {\overline{{\mathcal H}'}}$.
\end{proof}

\begin{claim}\label{claim:k-iso} ${\mathcal H}$ and ${\mathcal H}'$ are $k$-hypomorphic up to complementation for $k\leq h+r-1$. 
\end{claim}
\noindent{\bf Proof of Claim \ref{claim:k-iso}.}
Since $V$ is infinite, Proposition \ref{thm:down-k-iso} applies. Hence  it suffices to prove this property for $k=h+r-1$. 
Let $K\subseteq V$, with $\vert K\vert = h+r-1$. \\
Case 1. $F\nsubseteq K$. Then clearly ${\mathcal H}_{\restriction K}={{\mathcal H}'}_{\restriction K}$.\\
Case 2.   $F\subseteq K$. Let $\overline{\varphi}$ the map from $K$ into $K$ defined by ${\overline{\varphi}}(x)= {\varphi}(x)$ if $x\in F$, and 
${\overline{\varphi}}(x)= x$ if $x\in K\setminus F$. Clearly, ${\overline{\varphi}}$ is a permutation of $K$.  
We will show that $\overline{\varphi}$ is an isomorphism from 
 ${\mathcal H}_{\restriction K}$ onto  $\overline{{{\mathcal H}'}_{\restriction K}}$. 
 Let $A\in [K]^h$, we will check the equivalence:
 \begin{equation}\label{eq:00}
 A\in {\mathcal E}_{\restriction K}\Longleftrightarrow {\overline{\varphi}}(A)\in \overline{{{\mathcal E}'}_{\restriction K}} \end{equation}
 Note that $A\in {\mathcal E}_{\restriction K}\Longleftrightarrow A\cap F \in {\mathcal A}$. Also, note that 
$A\cap F \in {\mathcal A}\Longleftrightarrow \varphi (A\cap F)\in {\mathcal A}'$.
Since ${\overline{\varphi}}(A) \cap F =    \varphi (A\cap F)$ we have
\begin{equation}\label{eq:1}
 A\in {\mathcal E}_{\restriction K}\Longleftrightarrow {\overline{\varphi}}(A) \cap F \in {\mathcal A}'
 \end{equation}
Now, ${\overline{\varphi}}(A)\in {{{\mathcal E}'}_{\restriction K}}\Longleftrightarrow   {\overline{\varphi}}(A) \cap F \in {\mathcal A} \ \mbox{or}\ F\subseteq A$, that is:
\begin{equation}\label{eq:2}
{\overline{\varphi}}(A)\notin {{{\mathcal E}'}_{\restriction K}}\Longleftrightarrow   {\overline{\varphi}}(A) \cap F \notin {\mathcal A} \ \mbox{and}\ F\nsubseteq A.
\end{equation}

To check (\ref{eq:00}) it suffices to check 
\begin{equation}\label{eq:000}
{\overline{\varphi}}(A) \cap F \in {\mathcal A}' \Longleftrightarrow   {\overline{\varphi}}(A) \cap F \notin {\mathcal A} \ \mbox{and}\ F\nsubseteq A.
\end{equation}

${\overline{\varphi}}(A) \cap F \in {\mathcal A}'\Rightarrow {\overline{\varphi}}(A) \cap F \notin {\mathcal A}$ (because $\{{\mathcal A},{\mathcal A'}\}$ is a partition), and $F\nsubseteq {\overline{\varphi}}(A)$. 
Since $\overline{\varphi}(F)=F$, then  $F\nsubseteq A$. Conversely, since $F\nsubseteq A$, then  
$F\cap A\neq F$. From $\vert F\vert = r\leq h = \vert A\vert$, $F\subseteq K$  and 
$\vert K\vert = h+r-1$, we have $F\cap A\neq \emptyset$. Then  $F\cap A$ is a proper subset of $F$, thus 
${\overline{\varphi}}(A) \cap F$  is a proper subset of $F$.  Since ${\overline{\varphi}}(A) \cap F \notin {\mathcal A}$, then 
${\overline{\varphi}}(A) \cap F \in {\mathcal A}'$.\hfill $\Box$

\begin{claim}\label{claim:power} Let $F$ be a $r$-element subset of $V$, $r\geq 2$. Then there is a partition $\{{\mathcal A},{\mathcal A'}\}$ of ${\powerset}^{\star}(F)$ into two blocks and a permutation  $\varphi$ of $F$ satisfying Equation (\ref{duality})
if and only if $r$ is a power of $2$. 
\end{claim}
\noindent {\bf Proof of Claim \ref{claim:power}.} If there is a partition $\{{\mathcal A},{\mathcal A}'\}$ of ${\powerset}^{\star}(F)$ into two blocks and a permutation  $\varphi$ of $F$ satisfying Equation (\ref{duality}), then for each integer $k$, with $1\leq k\leq r-1$, the number of $k$-element subsets of $F$ is even, thus  ${r \choose k}$ is even for all $k\in \{1, \ldots ,r-1\}$.  Then, by Corollary \ref{cor-lucas}, $r$  is a power of $2$. 

Conversely, if $r$ is a power of $2$, with $r\geq 2$,  we can find a permutation $\varphi$ of a $r$-element set $F$  and a partition of ${\powerset}^{\star}(F)$ into two classes ${\mathcal A},{\mathcal A}'$ such that for each proper subset $K$ of $F$, $\overline{\varphi} (K)$ and $K$ are not in the same class (where $\overline{\varphi} (X) = \{ \varphi (x)\ :\ x \in X \}$).
Take for $\varphi$ a circular permutation of $F$. Fix  an integer $k$. 
Let $\overline \varphi$ be the induced permutation on $[F]^k$.
We have $\overline{\varphi}^r=\overline {\varphi^r}= id_{{\powerset}^{\star}(F)}$. 
Then the order of $\overline {\varphi }$ divides $r$, therefore it is of the form $2^{r'}$. 
It is easy to see that $\overline{\varphi}^r$ has no fixed point. 
Decompose $\overline \varphi$ in cycles, the order of each cycle divides $2^{r}$, so is even.
So each cycle is not trivial with even order. We say that two subsets of $F$ of size $k$ are equivalent if we pass from one to the other by some 
$\overline{\varphi}^s$ with $s$ even. This gives a partition $\{{\mathcal A},{\mathcal A'}\}$ of ${\powerset}^{\star}(F)$ into two blocks satisfying 
Equation (\ref{duality}).
 \hfill $\Box$
\end{proof}

\section{Possible generalizations}
Various kind of isomorphy have been considered for hypergraphs e.g \cite {berge-rado}. Here we consider the following notion. 
Let $W$ be a set and $\mathfrak G$ a subgroup of the group $\mathfrak S(W)$ of permutations of $W$. Let $h$ be an integer. A \emph{$h$-uniform hypergraph valued by $W$} is a pair $\mathcal H:= (V, c)$ where $c$ is a map from $[V]^h$ into $W$. Let  $\mathcal H:= (V, c)$ and  $\mathcal H':= (V, c')$ be  two $h$-uniform hypergraphs valued by $W$; a map $f: V\rightarrow V'$ is an \emph{ isomorphism up to $\mathfrak G$} if there is some  $\sigma\in \mathfrak G$ such that $c'\circ \overline f=\sigma\circ c$ (where $\overline f $ is the extension of $f$ to $[V]^h$).  We say that $\mathcal H$ and  $\mathcal H'$ are  \emph{equal up to $\mathfrak G$} if $V=V'$ and the identity map is an \emph{ isomorphism up to $\mathfrak G$} of $\mathcal H$ onto  $\mathcal H'$ (that is $c'=\sigma \circ c$  for some  $\sigma\in \mathfrak G$).

\subsection{Equality  and isomorphy  up to a permutation group}It is natural to ask if there are two non-negative integers $k$ and $t$, $k\leq t$,  such that two $h$-uniform hypergraphs ${\mathcal H}$ and ${\mathcal H}'$ on the  same set $V$ of vertices, $\vert V\vert \geq t$,  and valued by $W$, are equal up to $\mathfrak G$  whenever ${\mathcal H}$ and ${\mathcal H}'$ are $k$-hypomorphic up to $\mathfrak G$.

The answer is negative in general. Here is an example for $h=2$, $W=\{0,1,2\}$ and $\mathfrak G:=\mathfrak S(W)$ the symmetric group on $W$. 

Take  $V:= \N$, $\mathcal  H:= (V, c)$ where  $c(\{0,1\})=c(\{1,2\})=0, c(\{0,2\})=1, c(\{x, y\})=2$ for all other pairs and $\mathcal  H':= (V, c')$ where  $c'(\{0,1\})=c'(\{0,2\})=0, c'(\{1,2\})=1$ and $c'(\{x, y\})=2$ for all other pairs.

\begin{problem}\label{problem:reconstruction} Is it true that there are two non-negative integers $k$ and $t$, $k\leq t$,  such that two  graphs ${\mathcal G}$ and ${\mathcal G}'$ on the  same set $V$ of vertices, $\vert V\vert \geq t$ and valued by $W$, are isomorphic up to $\mathfrak S(W)$  whenever they  are $k$-hypomorphic up to $\mathfrak S(W)$?
\end{problem}

The integers $k$ and $t$ if they exist will depend upon the cardinality of $W$. We give an example of two labelled graphs showing that $k$ must be at least $\frac{2}{3}\vert W\vert-2$. This example, inspired of \cite{pouzet 79},   somewhat codes  a cylinder and a Moebius band.  

Let $n\in \N$, $n\geq 3$, $W:=(\Z/n\Z\times \Z/3 \Z)\cup\{ 0, 1\}$ and $V:= \Z/n\Z\times \Z/2\Z$. Let $c: [V]^2\rightarrow W$ defined by: $c(\{(i,0), (i,1)\}):=1$,  $c(\{(i,0), (i+1,0)\})=c(\{(i,1), (i+1,1)\}):=(i,0)$, $c(\{(i,0), (i+1,1)\}):=(i, 1)$, $c(\{(i,1), (i+1,0)\}):=(i, 2)$ and  $c(\{u, v\})=0$ for all other pairs. Let $c': [V]^2\rightarrow W$ be obtained from $c$ by changing the values on the pairs $\{(0,i), (1,j)\}$ with $i,j\in \{0,1\}$ so that  $c'(\{(0,0), (1,1)\})= c'(\{(0,1), (1,0)\})=(0, 0)$, $c'(\{(0,1), (1,1)\})=(0, 2)$, and $c'(\{(0,0), (1,0)\})=(0,1)$. 

\begin{lemma} \label{3n+2} The  valued graphs $G:=(V,c)$ and $G':=(V,c')$ are not isomorphic up to 
$\mathfrak S(W)$ 
but  their restrictions to  every proper subset  of $V$ are isomorphic up to 
$\mathfrak S(W)$. 
\end{lemma}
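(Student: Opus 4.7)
My plan is to exhibit a combinatorial invariant under $\mathfrak{S}(W)$-isomorphism. In both valued graphs, the set of colors whose class contains exactly two edges is $\{(i,0):i\in\Z/n\Z\}$; any isomorphism $(f,\sigma)$ must send size-$2$ classes to size-$2$ classes, so $\bar f$ maps the union of all size-$2$ color classes of $G$ bijectively onto that of $G'$, giving an isomorphism of the corresponding unlabelled graphs on $V$. For $G$ this union is the pair of ``row cycles'' $(0,\epsilon)-(1,\epsilon)-\cdots-(n-1,\epsilon)$ for $\epsilon\in\{0,1\}$, i.e.\ two disjoint $n$-cycles; for $G'$ the two horizontals between columns $0$ and $1$ are replaced by the two diagonals $\{(0,0),(1,1)\}$ and $\{(0,1),(1,0)\}$, and a short trace shows the union becomes a single $2n$-cycle. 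A single cycle is not isomorphic to a disjoint union of two cycles, so no such $(f,\sigma)$ exists.

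\noindent\textbf{Restrictions: easy case.} Write $T:=\{(0,0),(0,1),(1,0),(1,1)\}$ and let $D$ denote the four edges between columns $0$ and $1$. These are precisely the edges on which $c$ and $c'$ disagree, and the colors involved lie in $\{(0,0),(0,1),(0,2)\}$. Fix $U\subsetneq V$. If some vertex of $T$ is missing from $U$, I take $f=\operatorname{id}_U$; a case check on the missing vertex shows that the two edges of $c$-color $(0,0)$, namely $\{(0,0),(1,0)\}$ and $\{(0,1),(1,1)\}$, cannot both lie in $[U]^2$, and similarly for $c'$-color $(0,0)$. Hence the constraints $c'(e)=\sigma(c(e))$ arising from the surviving edges of $D\cap[U]^2$ are consistent and determine a permutation $\sigma$ of $\{(0,0),(0,1),(0,2)\}$ (at most a $3$-cycle), which I extend by the identity on $W$.

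\noindent\textbf{Restrictions: $T\subseteq U$.} Otherwise $T\subseteq U$; since $U$ is proper, the set $\{k\geq 2:\{(k,0),(k,1)\}\not\subseteq U\}$ is non-empty, so let $k^*$ be its minimum and fix $\epsilon_*\in\{0,1\}$ with $(k^*,\epsilon_*)\notin U$. Define
\[
f(i,\epsilon)=\begin{cases}(i,1-\epsilon),&1\leq i\leq k^*-1,\\(i,\epsilon),&\text{otherwise.}\end{cases}
\]
By the minimality of $k^*$, every column $j\in\{1,\dots,k^*-1\}$ is entirely in $U$, so $f$ is a bijection of $U$. An edge-by-edge verification then produces a compatible $\sigma$: on the four edges of $D$ the row swap in column $1$ converts the ``twisted'' $c'$-labels into the ``straight'' $c$-labels, so $\sigma$ can fix $(0,0),(0,1),(0,2)$; within the swap region horizontals are preserved by $f$ while each diagonal forces $\sigma$ to swap $(i,1)$ and $(i,2)$ for $i=1,\dots,k^*-2$; at the boundary between columns $k^*-1$ and $k^*$, the missing vertex of column $k^*$ absorbs the extra twist and forces $\sigma$ to transpose $(k^*-1,0)$ with $(k^*-1,1)$ if $\epsilon_*=0$ and with $(k^*-1,2)$ otherwise; elsewhere $f$ and $\sigma$ are the identity.

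\noindent\textbf{Main obstacle.} The delicate part is this last case: I must verify that the row swap stays a bijection of $U$ despite further missing vertices beyond column $k^*$ (guaranteed by the minimality of $k^*$, which keeps the arc $2,\dots,k^*-1$ full), and that the constraints on $\sigma$ coming from the three ``interfaces''---the modified columns $0$--$1$, the interior of the row-swap region, and the boundary column $k^*-1$/$k^*$---touch pairwise disjoint colors, so that they amalgamate into a single permutation of $W$. This disjointness is precisely why $k^*\geq 2$ (separating the modifications from the boundary) and why the row swap stops at column $k^*-1$.
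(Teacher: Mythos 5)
Your proof is correct. For the second half (restrictions to proper subsets) it is essentially the paper's argument: the same easy case when a vertex of $\{(0,0),(0,1),(1,0),(1,1)\}$ is missing (with the same consistency check that the two $c$-preimages and the two $c'$-preimages of $(0,0)$ cannot both survive), and otherwise the same row-swap $f$ on the full columns $1,\dots,k^*-1$ (the paper's $K_1$ with $i_1=k^*-1$) together with the same $\sigma$: identity on the colors $(0,\cdot)$, the transposition of $(i,1)$ and $(i,2)$ in the interior, and a boundary transposition at $(k^*-1,\cdot)$ determined by which vertex of column $k^*$ is absent. For the first half you take a genuinely simpler route. The paper introduces an auxiliary graph $G_c$ on ordered pairs of vertices, with adjacency defined by a list of color coincidences, and distinguishes $G$ from $G'$ by the presence or absence of $n$-cycles in $G_c$ versus $G'_{c'}$. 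You instead use the union of the color classes of size exactly $2$, which is a $\mathfrak S(W)$-invariant subgraph of $V$; a class-size census (the classes of the colors $(i,0)$ have size $2$, those of $(i,1),(i,2)$ size $1$, that of $1$ size $n\geq 3$, that of $0$ size $2n(n-3)\neq 2$) shows this subgraph consists of two disjoint $n$-cycles in $G$ and a single $2n$-cycle in $G'$. Your invariant is more elementary and quicker to verify, at the cost of depending on the exact multiplicities of the coloring; both arguments ultimately rest on the same cylinder-versus-M\"obius cycle count.
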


If we set  $k:=2n-1$ then since $\vert W\vert =3n+2$ and $\vert V\vert =2n$, we have $k < \frac{2}{3}\vert W\vert-2$ ($=2n-\frac{2}{3}$), the claim ensures that this values of $k$ is too small to yield an isomorphy up to a permutation group. \\

\noindent{\bf {Proof of Lemma \ref{3n+2}.}} \\
1) $G$ and $G'$ are not isomorphic up to  $\mathfrak S(W)$. 
Indeed, associate to $G= (V, c)$  the  graph $G_c$ whose vertex set is the set $^2V$ of directed pairs $(x,y)$ of distinct elements of $V$ and edges are pairs  $\{e, e'\}$ with $e=(x,y)$, $e'=(x',y')$ such that $c(\{x,y\})=c(\{x',y'\})$, $c(\{x,x'\})=c(\{y,y'\})\not = c(\{x,y\})$,  $c(\{x,y'\})\not=c(\{y,x'\})$, $c(\{x,y'\})$ and $c(\{y,x'\})$ distinct from $c(\{x,y\})$ and $c(\{x,x'\})$. Then, observe that if a directed pair $e:=(x,y)$ is not an isolated vertex in this graph then  $\{x,y\}\in \{ \{(i,0), (i,1)\},  \{(i,0), (i+1,0)\} , \{(i,1), (i+1,1)\}\}$. Next, observe that $G_c$ contains  exactly two  cycles of length $n$; one   made of the vertices $((i,0), (i, 1))$, $ i\in \Z/n\Z$, the other made of vertices $((i,1), (i, 0))$ for $ i\in \Z/n\Z$. Let  $G'_{c'}$ be the graph  defined by the same way. This graph contains a cycle of length $2n$, made of vertices   $((i,j), (i, j+1))$, $(i,j)\in \Z/n\Z\times \Z/2\Z$, the cycle being enumerated as 
$((0,0), (0, 1))$, $((1,1), (1, 0))$,  $((2,1), (2, 0))$,  \ldots , $((n-1,1), (n-1, 0))$, 
$((0,1), (0, 0))$, $((1,0), (1, 1))$,  $((2,0), (2, 1))$,  \ldots ,  $((n-1,0), (n-1, 1))$. There is 
no shorter cycle. Suppose that   $G$ and $G'$ are  isomorphic up to  $\mathfrak S(W)$ via some map $f$ and some permutation of $W$.   The map $f$   induces an isomorphism from  $G_c$ onto  $G'_{c'}$ hence a  $n$-cycle of $G_c$ is sended onto an  $n$-cycle of $G_{c'}$, but there are none. A contradiction. \\
2) For every proper subset $K$ of $V$ the restrictions $G_{\restriction K}$ and $G'_{\restriction K}$ are isomorphic up to $\mathfrak S(W)$.  First, suppose that   $K$ is disjoint from one of the sets $\{(0,0), (0,1)\}$, $\{(1,0), (1,1)\}$  then the two valued graphs are identical hence isomorphic. Next, suppose that $K$ is not disjoint from these sets but does not contain some element of  $L:=\{(0,0), (0,1), (1,0),(1,1)\}$. We claim that in this case, the identity on $K$ and some permutation $\sigma$ of $W$ form an isomorphism up to $\mathfrak S(W)$ 
from $G_{\restriction K}$ onto $G'_{\restriction K}$. Suppose for  an example that $(0,1)\not \in K$ holds, thus $(0,0) \in K$. In this case, set $\sigma$  the permutation of $W$ such that $\sigma((0,0))= (0,1)$, 
$\sigma((0,1))= (0, 0)$ and $\sigma(u)=u$ for every $u\in W\setminus \{(0,0), (0,1)\}$. Finally, suppose that $L\subseteq K$. 
Let $i_1$ be the largest integer such that 
$K_1:= \{1,2,\ldots , i_1 \}\times \Z/2\Z\subseteq K$. 
Since $(1,0), (1,1)\in K$, the integer $i_1$ is well-defined. 
Furthermore,  $i_1<n-1$. Otherwise, since $(0,0)$ and $(0,1)$ belong to $K$, we would have 
$K=\Z/n\Z\times  \Z/2\Z=V$.
From the definition of $i_1$, we have $(i_1+1,0)$ or $(i_1+1,1)\notin K$.
Suppose that $(i_1+1,1)\notin K$. 
The case $(i_1+1,0)\notin K$ will be similar. 
Let $f$ be the map from $K$ to $K$ which is the identity on $K\setminus K_1$ and which exchanges $(i,0)$ and $(i,1)$ on $K_1$ and let $\sigma$ be the permutation of $W$ such that 
$\sigma (i_1,0)=(i_1,2)$, $\sigma (i_1,2)=(i_1,0)$,  $\sigma (i,1)=(i,2)$, $\sigma (i,2)=(i,1)$  
for $i\in \{1,2,\ldots ,i_1-1\}$ and $\sigma (u)=u$ for other elements of $W$. 

Note that 
\begin{equation}\label{eq:001}
\sigma (i,j)\neq (i,j)  \Rightarrow i\in \{1,2,\ldots ,i_1\}.
\end{equation}
We check that $f$ is an isomorphism with respect to $\sigma$, that is:
\begin{equation}\label{eq:0011}
 \sigma (c(u,v))=c'(f(u),f(v))\ \mbox{for every}\ u,v\in K.
 \end{equation}
If $u,v\in K\setminus K_1$, then $f(u)=u$, $f(v)=v$ and due to (\ref{eq:001}), 
$c'(u,v)=\sigma (c(u,v))$ as required by (\ref{eq:0011}). \\
Suppose that $u,v\in K_1$. We have $u=(i,j)$, $v=(i',j')$ and $u':=f(i,j)=(i,j\dot{+}1)$, 
 $v':=f(i',j')=(i',j'\dot{+}1)$ where $\dot{+}$ is the sum modulo $2$.\\
 If $i=i'$, $c'(u',v')=c(u,v)=  \sigma (c(u,v))$ as required by (\ref{eq:0011}). If $i\neq i'$, we may suppose 
 $i'=i\dot{+}1$. There are two cases. First $j=j'$. In this case, $c(u,v)=(i,0)$, $c'(u',v')=(i,0)=
 \sigma (i,0)$ since $i\neq i_1$. Next $j\neq j'$, if $u=(i,1)$ and $v=(i+1,0)$, then $c(u,v)=(i,2)$, $c'(u',v')=(i,1)=  \sigma (i,2)$. The case $u=(i,0)$ and $v=(i+1,1)$ is similar.\\
 Suppose that $u\in K_1$ and $v\in K\setminus K_1$. 
 Hence $u'=(i,j+1)$ and  $v'=v=(i',j')$. We may suppose that either 
($i=1$ and $i'=0$)  
 or  $i=i_1$ and $i'=i_1+1$. 
 In the first case, suppose that $u=(1,1)$. If $v=(1,0)$ then $c(u,v)=(0,0)$, 
 whereas $c'(u',v')=(0,0)= \sigma (c(u,v))$.  
 If $v=(0,0)$ then $c(u,v)=(0,1)$, 
 whereas $c'(u',v')=(0,1)= \sigma (c(u,v))$.  The case  $u=(0,0)$ is similar. 
 In the second case, we have $j'=0$. If $j=1$ then, $c(u,v)=(i_1,2)$ and 
 $c'(u',v')= (i_1,0)=\sigma (c(u,v))$. If $j=0$ then $c(u,v)=(i_1,0)$ and 
  $c'(u',v')=(i_1,2)=\sigma (c(u,v))$. \hfill $\Box$

\begin{problem}
Find examples showing that $k\geq 2\vert W\vert$.
\end{problem}

A special instance of Problem \ref{problem:reconstruction} is this. Let $n$ be a non-negative integer, suppose that $W:=\{0,1\}^n$ and that $\mathfrak G$ is  the permutation group  made of the identity and the involution on $W$ defined by $\overline u:= u+1$ where $1:= (1, \dots, 1)\in W$ and the addition is modulo $2$. A valued graph $G:= (V, c)$ by $W$ identifies to a multigraph; if we define the \emph{complement} of $G$ by setting $\overline G:= (V, \overline c)$ where  $\overline c (\{x,y\}):= \overline {c(\{x,y\})}$, then isomorphy up to complementation means isomorphy up to $\mathfrak G$.  Find $k$ and $t$ such that the conclusion of Problem \ref{problem:reconstruction} holds. For $n=1$ this is the result of \cite{dlps1}. 

\subsection{Isomorphy of hypergraphs up to a permutation group and isomorphy of relational structures}

Associate to each  valued $h$-uniform hypergraph $\mathcal H:= (V, c)$ valued by $W$  the pair $\widehat {\mathcal {H}}:= (V, Eq_c)$ where $Eq_c$ is the kernel of $c$,  that is the equivalence relation defined on $[V]^h$ by $A Eq_c A'$ if $c(A)=c(A')$. Associate also  the  $2h$-ary-structure $\widetilde {\mathcal H}:= (V, \rho_c)$ where $\rho_c$ is the subset of $V^{2h}$ made of $2h$-uples $(x_1, \dots x_{2h})$ such that $c(\{x_1, \dots x_h\})= c(\{x_{h+1}, \dots, x_{2h}\})$. 

We have:

\begin{lemma} \label{isomorphismtypes}Let $\mathfrak G:=\mathfrak S(W)$ be  the symmetric group on $W$. Let $\mathcal  H:= (V, c)$ and $\mathcal H':= (V', c')$ be two $h$-uniform hypergraphs  valued by $W$ and a map $f:V\rightarrow V'$.  
Then the following properties are equivalent:

\begin{enumerate}[(i)]
\item  $f$ is an isomorphism  up to $\mathfrak G$ from $\mathcal H$ onto $\mathcal H'$; 

\item $f$ is bijective and
\begin{equation}
H Eq_c H' \;  \Leftrightarrow \;  f(H) Eq_{c'} f(H')\;  \; \text{for all} \;  H, H'\in [V]^h; 
\end{equation} 

 \item $f$ is an isomorphism from $\widetilde {\mathcal H}$ onto $\widetilde {\mathcal H'}$. 
  \end{enumerate}
\end{lemma}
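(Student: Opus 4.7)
The plan is to establish the cycle $(i) \Rightarrow (ii) \Rightarrow (iii) \Rightarrow (i)$.

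For $(i) \Rightarrow (ii)$, suppose $f$ is bijective and $\sigma \in \mathfrak S(W)$ satisfies $c' \circ \overline f = \sigma \circ c$. Since $\sigma$ is a bijection of $W$, for every $H, H' \in [V]^h$ we have $c(H)=c(H')$ if and only if $\sigma(c(H))=\sigma(c(H'))$, i.e.\ $c'(f(H))=c'(f(H'))$; this is exactly the biconditional $H \, Eq_c \, H' \Leftrightarrow f(H) \, Eq_{c'} \, f(H')$.

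The implication $(ii) \Rightarrow (iii)$ is essentially a matter of unwinding the definition of $\rho_c$. A tuple $(x_1,\dots,x_{2h}) \in V^{2h}$ whose first and last $h$ coordinates each form an $h$-element set lies in $\rho_c$ exactly when those two sets, say $H$ and $H'$, satisfy $H \, Eq_c \, H'$. Condition $(ii)$ then says exactly that $f$, extended coordinate-wise to a bijection $V^{2h}\to (V')^{2h}$, sends $\rho_c$ to $\rho_{c'}$ and pulls $\rho_{c'}$ back to $\rho_c$, which is the statement that $f$ is an isomorphism of the $2h$-ary structures, i.e.\ $(iii)$.

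The substantive step is $(iii) \Rightarrow (i)$. I would define a candidate $\sigma$ on the subset $c([V]^h) \subseteq W$ by $\sigma(c(H)) := c'(f(H))$ for $H \in [V]^h$. Well-definedness and injectivity both follow from $(ii)$ (which is derivable from $(iii)$ as above): $c(H_1)=c(H_2)$ is equivalent to $H_1 \, Eq_c \, H_2$, hence to $f(H_1) \, Eq_{c'} \, f(H_2)$, hence to $c'(f(H_1))=c'(f(H_2))$. Bijectivity of $f$ makes $\overline f$ a bijection $[V]^h \to [V']^h$, so the image of $\sigma$ is exactly $c'([V']^h)$; hence $\sigma$ is a bijection of $c([V]^h)$ onto $c'([V']^h)$. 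To finish, I would extend $\sigma$ to a permutation of all of $W$ by choosing any bijection between the complements $W\setminus c([V]^h)$ and $W\setminus c'([V']^h)$, which have equal cardinality (immediate in the finite case, the setting of interest in this section). With $\sigma$ so extended, the identity $c' \circ \overline f = \sigma \circ c$ holds by construction, giving $(i)$. The only mildly delicate point in the whole argument is the well-definedness and bijectivity of the natural $\sigma$ on $c([V]^h)$; everything else is routine.
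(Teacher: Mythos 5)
The paper states this lemma without proof, so there is nothing to compare your argument against; on its own merits, your proof is correct and is the natural one. The cycle $(i)\Rightarrow(ii)\Rightarrow(iii)\Rightarrow(i)$ is well chosen, the first two implications are the routine unwindings you describe, and you correctly isolate the only substantive point: in $(iii)\Rightarrow(i)$ the partial map $\sigma$ defined on $c([V]^h)$ by $\sigma(c(H)):=c'(f(H))$ is well defined and bijective onto $c'([V']^h)$ precisely because of $(ii)$, and it must then be extended to a permutation of all of $W$. Your caveat about extending $\sigma$ is not merely a formality: the statement as written is actually false for infinite $W$. For instance, with $h=1$, $V=V'=W=\N$, $c(\{n\})=n$, $c'(\{n\})=2n$ and $f=\mathrm{id}$, conditions $(ii)$ and $(iii)$ hold (both kernels are trivial) but no permutation $\sigma$ of $W$ satisfies $c'=\sigma\circ c$, since any such $\sigma$ would have to map $W$ onto the even numbers. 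So the lemma implicitly assumes $W$ finite (as in all the paper's examples, e.g.\ $W=\{0,1\}^n$), or more generally that $W\setminus c([V]^h)$ and $W\setminus c'([V']^h)$ are equinumerous; under that hypothesis your extension step goes through and the proof is complete.
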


We  mention that with Lemma \ref{isomorphismtypes} and Theorem \ref{gottlieb-kantor},  Proposition 
 \ref {thm:down-k-iso} is generalized as follows.
 
 \begin{proposition}
Let $W$ be a set and $\mathfrak G$ be a subgroup of the group $\mathfrak S(W)$ of permutations of $W$. Let $v,k$ be non-negative integers, Let  $t \leq min{(k,  v-k)}$ and ${\mathcal H}$ and ${\mathcal H}'$ be two  $h$-uniform hypergraphs, on the same set $V$ of $v$ vertices, valued by $W$.  If ${\mathcal H}$ and ${\mathcal H}'$ are $k$-hypomorphic up to  $\mathfrak G$ then they are $t$-hypomorphic up to $\mathfrak G$.
\end{proposition}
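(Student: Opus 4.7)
The plan is to imitate the proof of Proposition \ref{thm:down-k-iso} verbatim, replacing ordinary isomorphism between hypergraphs by isomorphism up to $\mathfrak G$. Let $\mathcal T$ denote the collection of isomorphism-up-to-$\mathfrak G$ classes (``types'') of $h$-uniform $W$-valued hypergraphs on a $t$-element vertex set. For each $\tau\in\mathcal T$ and each $h$-uniform $W$-valued hypergraph $\mathcal H$ on $V$, introduce the $0/1$-row vector $w_{\tau,\mathcal H}$ indexed by $[V]^t$ whose entry at $T$ equals $1$ precisely when the restriction $\mathcal H_{\restriction T}$ has type $\tau$.

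First I would show that $k$-hypomorphy up to $\mathfrak G$ forces, for every $K\in[V]^k$ and every $\tau\in\mathcal T$, the counting identity
\[
\vert\{T\in[K]^t:\mathcal H_{\restriction T}\in\tau\}\vert=\vert\{T\in[K]^t:\mathcal H'_{\restriction T}\in\tau\}\vert .
\]
To see this, pick any pair $(f,\sigma)$, with $f:K\to K$ a bijection and $\sigma\in\mathfrak G$, that realizes an isomorphism up to $\mathfrak G$ from $\mathcal H_{\restriction K}$ onto $\mathcal H'_{\restriction K}$. For every $T\in[K]^t$ the pair $(f_{\restriction T},\sigma)$ is an isomorphism up to $\mathfrak G$ from $\mathcal H_{\restriction T}$ onto $\mathcal H'_{\restriction f(T)}$; hence $T\mapsto f(T)$ is a type-preserving bijection of $[K]^t$ and the two multisets of types coincide.

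Rewritten in matrix form, the displayed equality reads $w_{\tau,\mathcal H}W_{t\;k}=w_{\tau,\mathcal H'}W_{t\;k}$ for every $\tau\in\mathcal T$. Since $t\leq\min(k,v-k)$, Theorem \ref{gottlieb-kantor} ensures that $W_{t\;k}$ has full row rank over $\Q$, and hence $w_{\tau,\mathcal H}=w_{\tau,\mathcal H'}$ for each $\tau\in\mathcal T$. Equivalently, for every $T\in[V]^t$ the restrictions $\mathcal H_{\restriction T}$ and $\mathcal H'_{\restriction T}$ share the same $\mathfrak G$-isomorphism type, which is exactly $t$-hypomorphy up to $\mathfrak G$. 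The argument is formally identical to the unvalued case of Proposition \ref{thm:down-k-iso}; the only slightly non-routine point is the ``transport of types along $f$'' above, but this is an immediate consequence of the definition of $\mathfrak G$-isomorphism, so I do not anticipate a genuine obstacle.
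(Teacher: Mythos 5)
Your proof is correct and follows exactly the route the paper intends: the paper gives no detailed argument for this proposition, merely noting that it is obtained by generalizing Proposition~\ref{thm:down-k-iso} via Theorem~\ref{gottlieb-kantor} (and Lemma~\ref{isomorphismtypes}), and your type-counting vectors $w_{\tau,\mathcal H}$ are the direct analogue of the vectors $w_{\mathcal G,\mathcal H}$ used there. The ``transport of types along $f$'' step you single out is indeed the only point needing verification, and your argument for it (restricting the pair $(f,\sigma)$ to each $T\in[K]^t$) is sound for an arbitrary subgroup $\mathfrak G$.
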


\section{Conclusions}
The motivation of this line of research comes from  several reconstruction results and conjectures about binary structures. The Ulam's reconstruction conjecture, still unsolved, is well-known (see the surveys \cite{BD1,BD2}). Fra\"{\i}ss\'e made a related conjecture about relational structures. The case of binary structures was handled by Lopez.    A \emph{binary structure} can be understood as a pair $R:= (V, c)$ where $c$ is a map from the set 
$V^2$ of ordered pairs $(x,y)$ of elements of $V$ into a set $W$. The notion of isomorphism, and $k$-hypomorphism can be defined as for valued graphs. It was shown by Lopez \cite{lopez} (see also \cite{FrLo}) that if two  binary structures  $R$ and $R'$ on a finite set $V$ are $k$-hypomorphic for all $k\leq 6$  then they are $k$-hypomorphic for every integer $k$. Supposing $W=\{0,1\}$,  Hagendorf and Lopez \cite{hagendorf-lopez} say that two  binary structures $R$ and $R'$ are   \emph{hemimorphic} if either they are isomorphic or one is isomorphic to the dual of the other (the \emph{dual} of $R:= (V, c)$  is $R^d:= (V, c^d)$ where $c^d(x,y):= c(y,x)$). They prove that  hemimorphy  behaves as  hypomorphy,  with a    treshold of $12$ instead of $6$.  Numerous  publications are built  on these results (e.g. \cite {lopez-rauzy, YB, YBG, boussairi}). But for $W:= \{0,1\}^n$ and the corresponding notion of duality, the reconstruction problem is unsolved.  Recently,   Ben Amira, Chaari, Dammak and Si Kaddour \cite{benamira} replace the dual of a binary structure $R:=(V,c)$ (with value in $W= \{0,1\}$)  by its \emph{complement} $R:= (V,  \overline c) $  defined by setting $\overline c(x,y):= 1+ c(x,y)$ (where the addition is defined modulo $2$), they consider the  notion of  isomorphy up to  complementation, and  obtain some encouraging results.  Instead of $W= \{0,1\}$ we may suppose that $W= \{0,1\}^n$,  fix a group of permutations $\mathfrak G$ on $W$ and   ask the same question as in Problem \ref{problem:reconstruction}. As it is easy to see, results mentionned above  are special instances of this question. On an other hand, none of these results   extend  to ternary relations \cite {pouzet 79}. Since the relation associated to a $h$-uniform hypergraph  has arity $2h$, the answer to Problem  \ref{problem:reconstruction} does not seem to follow from general results.


\begin{thebibliography}{1} 
\bibitem{benamira} A. Ben Amira, B. Chaari, J. Dammak  and H. Si Kaddour, A reconstruction problem of digraphs under the hypomorphy up to complementation, work in progress, Lyon, december 2014. 
\bibitem{berge-rado} C. Berge and R. Rado,
Note on isomorphic hypergraphs and some extensions of Whitney's theorem to families of sets. {\em  J. Combinatorial Theory Ser. B} {\bf 13} (1972), 226--241. 
\bibitem {BD1} J. A. Bondy, A graph reconstructor's manual.  \emph{Surveys in combinatorics}, 1991 (Guildford, 1991),  221--252, London Math. Soc. Lecture Note Ser., 166, Cambridge Univ. Press, Cambridge, 1991. 
\bibitem {BD2} J. A. Bondy and R. L. Hemminger, Graph reconstruction--a survey,
\emph{J. Graph Theory} {\bf 1} (1977), no.  3, 227--268.
\bibitem {Bo} J. A. Bondy and U. S. R. Murty,  \emph{Graph theory}. Graduate Texts in Mathematics, 244. Springer, New York, 2008,  xii+651 pp.
\bibitem {YB} Y. Boudabbous, 
{La $5$-reconstructibilit\'{e} et l'ind\'{e}composabilit\'{e} des relations binaires}, 
{\em European J. Combin.} {\bf 23} (2002), no. 5, 507--522.
\bibitem {YBG} Y. Boudabbous and G. Lopez,
{The minimal non-($\leq k$)-reconstructible relations}, 
 \emph{Discrete Math.} {\bf 291} (2005), no. 1-3, 19--40.
\bibitem{boussairi} B. Boushabi and A.  Boussa\"{\i}ri,  Les graphes $(-2)$-monoh\'emimorphes, 
\emph{C. R. Math. Acad. Sci. Paris}  {\bf 350} (2012), no. 15-16, 731--735.
\bibitem {dlps1} J. Dammak, G. Lopez, M. Pouzet and H. Si Kaddour, Hypomorphy of graphs up to complementation, \emph{J. Combin. Theory Ser. B} {\bf 99} (2009), no. 1,  84--96.
\bibitem {dlps2} J. Dammak, G. Lopez, M. Pouzet and H. Si Kaddour,  Boolean sum of graphs and reconstruction up to complementation, \emph{Advances in Pure and Applied Mathematics} {\bf 4} (2013), 315--349. 
\bibitem{Fine} N. J. Fine,   {Binomial coefficients modulo a prime},
\emph{Amer. Math. Monthly} {\bf 54} (1947), no. 1, 589--592. 
\bibitem {Fr2} R. Fra\"{\i}ss\'e, Theory of relations. Revised edition. With an appendix by Norbert Sauer. Studies in Logic and the Foundations of Mathematics, 145. North-Holland Publishing Co., Amsterdam, 2000. ii+451 pp. (1st ed.1986).
\bibitem {FrLo} R. Fra\"{\i}ss\'e and G. Lopez, La reconstruction d'une relation dans l'hypoth\`ese forte: Isomorphie des restrictions \`a chaque partie stricte de la base, SMS, \emph{Les presses de l'universit\'e de Montr\'eal} (1990).
\bibitem{Go}  D. H. Gottlieb, A class of incidence matrices, \emph{Proc. Amer. Math. Soc.} {\bf 17}  (1966), 1233--1237.
\bibitem {hagendorf-lopez} J. G. Hagendorf and G. Lopez, La demi-reconstructibilit\'e des relations binaires d'au moins 13 \'el\'ements, \emph{C.R. Acad. Sci. Paris S\'er. I Math.} {\bf 317} (1993), no. 1, 7--12. 
\bibitem {KA} W. Kantor,  On incidence matrices of finite projection
and affine spaces, \emph{Math.Zeitschrift} {\bf 124} (1972), 315--318.
\bibitem{lopez} G. Lopez, Deux r\'esultats concernant la d\'etermination d'une relation par les types d'isomorphie de ses restrictions, \emph{C.R. Acad. Sci. Paris, S\'erie A}, {\bf 274} (1972), 1525--1528.
\bibitem{lopez-rauzy} G. Lopez and C. Rauzy,
{Reconstruction of binary relations from their restrictions of cardinality 2,3,4 and (n-1),}
II, \emph{Z. fur  Math.  Logik} {\bf 38} (1992), 157--168.
\bibitem{Lucas} E. Lucas, Sur les congruences des nombres eul\'eriens et les coefficients diff\'erentiels des fonctions trigonom\'etriques suivant un module premier, \emph{Bull. Soc. Math. France} {\bf 6} (1878), 49--54.
\bibitem{O} D. Oudrar, Sur l'\'enum\'eration de structures discr\`etes. Une approche par la th\'eorie des relations; document de travail, Universit\'e d'Alger, Ao\^ut 2014.  
\bibitem{oudrar-pouzet-cras} D. Oudrar and M. Pouzet, D\'ecomposition monomorphe des structures relationnelles et profil de classes h\'er\'editaires, 7 pp. 2014,  arXiv:1409.1432  
\bibitem {Pm} M. Pouzet,
Application d'une propri\'et\'e combinatoire des parties d'un ensemble aux
groupes et aux relations, \emph{Math. Zeitschr.}  {\bf 150} (1976), no. 2, 117--134.
\bibitem{pouzet 79} M. Pouzet, Relations non reconstructibles par leurs restrictions,  {\em J. Combin. Theory} Ser. B {\bf 26} (1979), no. 1, 22--34.
\bibitem{P-T-2013} M. Pouzet and N. M. Thi{\'e}ry,
 Some relational structures with polynomial growth and their
  associated algebras {I}: Quasi-polynomiality of the profile,
\emph{Electron. J. Combin.}, {\bf 20}, no. 2,  Paper 1, 35 pp. 
 \bibitem{W} R. M. Wilson,  A diagonal form for the incidence matrices of $t$-subsets vs. $k$-subsets,  {\em European J. Combin.} {\bf 11} (1990), no. 6, 609--615. 

\end{thebibliography}
\end{document}